\journalname{Mathematical Programming}
\newcommand{\tcb}{\textcolor{blue}}
\newcommand{\tcr}{\textcolor{red}}
\newcommand{\R}{{\mathbb R}}
\newcommand{\N}{{\mathbb N}}
\DeclareMathOperator{\argmin}{argmin}
\newcommand{\interior}{{\rm int}\kern 0.06em}
\DeclareMathOperator*{\esssup}{ess\,sup}
\newcommand{\xk}{x_k}
\newcommand{\xkp}{x_{k+1}}
\newcommand{\xkm}{x_{k-1}}
\newcommand{\cH}{{\mathcal H}}
\newcommand{\demi}{\frac{1}{2}}
\newcommand{\ie}{{\it i.e.}\,\,}
\def\<{\langle}
\def\>{\rangle}
\newcommand{\dotp}[2]{\left\langle #1,\,#2 \right\rangle}
\newcommand{\norm}[1]{\left\|{#1}\right\|}
\newcommand{\pa}[1]{\left({#1}\right)}
\newcommand{\bpa}[1]{\Big({#1}\Big)}
\newcommand{\bra}[1]{\left[{#1}\right]}
\newcommand{\E}[1]{\mathbb{E}\bra{#1}}
\newcommand{\set}[1]{\left\{{#1}\right\}}
\newcommand{\prob}{\mathbb{P}}
\newcommand{\sigalg}{\mathcal{F}}
\newcommand{\events}{\Omega}
\newcommand{\prspace}{\pa{\events, \sigalg, \prob}}
\newcommand{\borel}{{\mathcal{B}}}
\newcommand{\filt}{{\mathcal{F}}}
\newcommand{\Filt}{{\mathscr{F}}}
\newcommand{\Rfilt}{{\mathcal{R}}}
\newcommand{\RFilt}{{\mathscr{R}}}
\newcommand{\Pas}{\pa{\prob\mbox{-a.s.}}}
\newcommand{\EX}[2]{\E{#1 \mid #2}}
\newcommand{\VarX}[2]{\mathrm{Var}\bra{#1 \mid #2}}
\newcommand{\distS}[1]{\mathrm{dist}(#1,S)}
\newcommand{\seq}[1]{\pa{#1}_{k \in \N}}
\newcommand{\eqdef}{:=}
\renewcommand*{\backrefalt}[4]{%
\ifcase #1 %
(Not cited)%
\or
(Cited on p.~#2)%
\else
(Cited on pp.~#2)%
\fi
}
\begin{document}
\title{The stochastic Ravine accelerated gradient method with general extrapolation coefficients}

\author{Hedy Attouch, Jalal Fadili and  Vyacheslav Kungurtsev}

\institute{
  Hedy Attouch \at IMAG CNRS UMR 5149,\\ Universit\'e Montpellier, \\ Place Eug\`ene Bataillon, 34095 Montpellier CEDEX 5, France. \\ hedy.attouch@umontpellier.fr
  \and
  Jalal Fadili \at GREYC CNRS UMR 6072 \\ Ecole Nationale Sup\'erieure d'Ing\'enieurs de Caen \\ 14050 Caen Cedex France. 
\\ Jalal.Fadili@greyc.ensicaen.fr
\and
Vyacheslav Kungurtsev
\at Department of Computer Science \\ Faculty of Electrical Engineering \\ Czech Technical University, \\ 12000 Prague, Czechia 
\\ vyacheslav.kungurtsev@fel.cvut.cz
}

\date{Received: date / Accepted: date\footnote{The insight and motivation for the study of the Ravine method, in light of its resemblance to Nesterov's, as well as many of the derivations in this paper, was the work of our beloved friend and colleague Hedy Attouch. As one of the final contributions of Hedy's long and illustrious career before his unfortunate recent departure, the other authors are fortunate to have worked with him on this topic, and hope that the polished manuscript is a valuable step in honoring his legacy.}}

\maketitle

\begin{abstract}
In a real Hilbert space domain setting, we study the convergence properties of the stochastic Ravine accelerated gradient method for convex  differentiable optimization. We consider the general form of this algorithm where the extrapolation coefficients can vary with each iteration, and where the evaluation of the gradient is subject to random errors. This general treatment models a breadth of practical algorithms and numerical implementations. 
We show that, under a proper tuning of the extrapolation parameters, and when the error variance associated with the gradient evaluations or the step-size sequences vanish sufficiently fast, the Ravine method provides fast convergence of the values both in expectation and almost surely. We also improve the convergence rates from $\mathcal{O}(\cdot)$ to $o(\cdot)$ in expectation and almost sure sense. Moreover, we show almost sure summability property of the gradients, which implies the fast convergence of the gradients towards zero. This property reflects the fact that the high-resolution ODE of the Ravine method includes a Hessian-driven damping term. When the space is also separable, our analysis allows also to establish almost sure weak convergence of the sequence of iterates provided by the algorithm. We finally specialize the analysis to consider different parameter choices, including vanishing and constant (heavy ball method with friction) damping parameter, and present a comprehensive landscape of the tradeoffs in speed and accuracy associated with these parameter choices and statistical properties on the sequence of errors in the gradient computations.
We provide a thorough discussion of the similarities and  differences with the Nesterov accelerated gradient which satisfies similar asymptotic convergence rates. 
\end{abstract}

\keywords{Ravine method \and Nesterov accelerated gradient method \and general extrapolation coefficient \and stochastic errors \and Hessian driven damping \and convergence rates \and Lyapunov analysis}

\subclass{37N40 \and 46N10 \and 49M30\and 65B99 \and 65K05 \and 65K10 \and 90B50 \and 90C25}


\section{Introduction}\label{sec:prel} 
Given a real Hilbert space $\cH$,  
our study concerns the fast numerical resolution of the convex minimization problem 
\smallskip
\begin{center}
$
(\mathcal P) \quad \min   \left\lbrace f(x): \; x\in \cH \right\rbrace,
$
\end{center}
\smallskip
by the  Ravine accelerated gradient method. We make the following standing assumptions:\smallskip
\begin{equation}\tag{$\rm{H}$}\label{assum:H}  
\begin{cases}
f: \cH \rightarrow  \R \; \mbox{ is differentiable,} \; \nabla f \mbox{ is } L-\mbox{Lipschitz continuous}, \; S= \argmin f \neq \emptyset. \vspace{1mm} \\
\seq{s_k} \mbox{ is a positive sequence with } s_kL \in ]0,1].
\end{cases}
\end{equation}

\medskip

The Ravine Accelerated Gradient algorithm (\ref{eq:RAGgammak} for short) generates iterates $\seq{y_k,w_k}$ satisfying
\begin{equation}\label{eq:RAGgammak}\tag{${\rm (RAG)}_{\gamma_k}$}
\begin{cases}
w_k=  y_k - s_k \nabla f(y_k)  \vspace{2mm} \\
y_{k+1} = w_k +  \gamma_k \left( w_k - w_{k-1}\right) . \rule{0pt}{3pt} \hspace{4cm}
\end{cases}
\end{equation}
Let us indicate the role of the different parameters involved in the above algorithm:
\begin{enumerate}[a)]
\item The positive parameter sequence $\seq{s_k}$ is the step-size sequence applied to the gradient based update.

\item The non-negative extrapolation coefficients $\seq{\gamma_k}$ are linked to the inertial character of the algorithm. They can be viewed as control parameters for optimization purposes. 

\item In order to inform about the practical performance of algorithms realizing this method in common capplications, 
we will analyze the convergence rates  when the gradient terms are calculated with stochastic errors. Formally, we consider $\nabla f(y_k) + e_k$ instead of $\nabla f(y_k)$ in \ref{eq:RAGgammak} where $e_k$ is a zero-mean stochastic error.
\end{enumerate}

The Ravine method is often confused with Nesterov's method \cite{Nest1,Nest2}, which is close in its formulation and its convergence properties. This justifies an in-depth study of the Ravine method and its comparison with Nesterov's method.

\subsection{Historical aspects}
The Ravine method was introduced by   Gelfand and  Tsetlin \cite{GT} in 1961. It is a first-order method which only uses evaluations of the gradient of $f$. It is closely related  with the Nesterov  accelerated  gradient method, with which it has often been confused.  
Recent research concerning the understanding of accelerated first-order optimization methods, seen as temporal discretized dynamic systems, has made it possible to clarify the link between the two methods; see the recent work of Attouch and Fadili \cite{AF}.

The Ravine method was introduced in \cite{GT} in the case of a fixed positive extrapolation coefficient $\gamma_k \equiv \gamma >0$. Recent research has shown the advantage of setting $\gamma_k = 1 - \frac{\alpha}{k}$, which, for $\alpha \geq 3$, provides asymptotic convergence rates similar to the accelerated gradient method of Nesterov.
The Ravine  method mimics the flow of water in the mountains which first flows rapidly downhill through small, steep ravines and then flows along the main river in the valley, hence its name.
 A geometric view of the Ravine Accelerated Gradient method  is given in Figure \ref{figRAG}. 

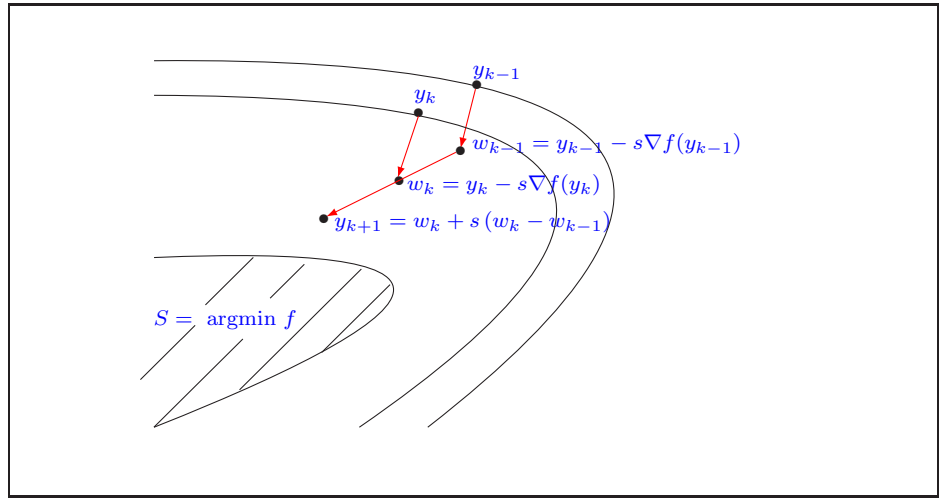
\begin{figure}[hbt!]
 \centering
 \fbox{\begin{minipage}{12cm}

\setlength{\unitlength}{9cm}
\begin{picture}(1,0.7)(-0.3,0.06)

\put(0.364,0.647){$\bullet$}
\tcr{
\put(0.37,0.648){\vector(-1,-4){0.022}}
}

\put(0.34,0.55){$\bullet$}

\put(0.278,0.605){$\bullet$}
\tcr{
\put(0.287,0.61){\vector(-1,-3){0.031}}
}

\put(0.25,0.505){$\bullet$}

\tcr{
 \put(0.341,0.555){\vector(-2,-1){0.19}}
 }
\put(0.025,0.205){\line(1,1){0.178}}
\put(0.145,0.26){\line(1,1){0.1}}
\put(-0.1,0.15){\line(1,1){0.13}}
\put(0.07,0.34){\line(1,1){0.05}}

\put(-0.12,0.22){\line(1,1){0.08}}
\put(-0.025,0.33){\line(1,1){0.07}}

\put(0.14,0.45){$\bullet$}

\tcb{
\put(0.366,0.669){$y_{k-1}$}
\put(0.28,0.633){$y_{k}$}
\put(0.365,0.56){$w_{k-1} =   y_{k-1} -s\nabla f (y_{k-1}) $}
\put(0.27,0.5){$w_{k} =  y_{k} -s \nabla f (y_{k}) $}
\put(0.164,0.445){$y_{k+1}=  w_k + s \left( w_k - w_{k-1}\right)$}
\put(-0.1,0.3){$S = \mbox{ \rm argmin } f$}
}

\qbezier(-0.1,0.15)(0.6,0.43)(-0.1,0.4)
\qbezier(0.3,0.15)(1.0,0.7)(-0.1,0.69)
\qbezier(0.2,0.15)(0.9,0.637)(-0.1,0.639)

\end{picture}
\end{minipage}}
\caption{ \textbf{\rm {\normalsize (RAG): Ravine Accelerated Gradient method }}}
\label{figRAG}
\end{figure}

%
%
%

\smallskip

The Ravine method was a precursor of the accelerated gradient methods.  
It  has long been ignored but has recently appeared at the forefront of current research in numerical optimization, see for example Polyak \cite{Polyak_0}, Attouch and Fadili \cite{AF}, Shi, Du, Jordan and Su \cite{SDJS}. It comes naturally into the picture when considering the optimized first-order methods for smooth convex minimization, see \cite{DT,KF,PPR}.
The Ravine and Nesterov acceleration methods are both based on the operations of extrapolation and gradient descent, but in a reverse order. 
Furthermore, up to a slight change in the extrapolation coefficients, the two algorithms are associated with the same equations, each of them describing the evolution of different variables, explaining how the two have been casually confused in some of the literature. 

The high resolution ODE of the two algorithms gives the same inertial dynamics with Hessian-driven damping, providing a mathematical basis to explain the similarity of their convergence properties. 
A significant difference is that in the Ravine method the discretized form of the Hessian driven term comes explicitly, while in the Nesterov method it comes implicitly by applying a Taylor formula. This results in different extensions of the two algorithms to the non-smooth case via the corresponding proximal gradient algorithms, an ongoing research topic. 
We will see that a careful adjustment of the extrapolation parameters $(\gamma_k)_{k\in \N}$ provides fast convergence properties of the Ravine algorithm resembling those of the Nesterov method.
Taking a general coefficient $\gamma_k$ gives a broad picture of the convergence properties of this class of algorithms. Moreover, it shows the flexibility of the method, the results being unchanged taking for example $\gamma_k=\frac{k}{k+ \alpha}$ instead of $1 - \frac{\alpha}{k}$, as one of the many variations of the method. 

\subsection{Inertial stochastic gradient descent algorithms}
Due to the importance of the subject in optimization, several works have been devoted to the study of perturbations in second-order dissipative inertial systems and in the corresponding first order algorithms (aka momentum methods). For deterministic perturbations, the subject was first considered for the case of a fixed viscous damping (aka heavy ball method with friction \cite{Polyak_1,Polyak_2}) in \cite{acz,HJ1}, then for the accelerated gradient method of Nesterov, and of the corresponding inertial dynamics with vanishing viscous damping, see \cite{AC2R-JOTA,ACPR,AD15,SLB,VSBV}. 

Stochastic gradient descent methods with inertia are widely used in applications and at the core of optimization subroutines in many applications such as machine learning. Such algorithms are the subject of an active research work to understand their convergence behaviour and were studied in several works, focusing exclusively on stochastic versions of Nesterov's method and the heavy ball method; see~\cite{Lin2015,Frostig2015,Jain2018,AR,AZ,Yang2016,Gadat2018,Loizou2020,Laborde2020,LanBook,Sebbouh2021,Driggs22}.  

\subsection{Contributions}
In this paper, in a real separable Hilbertian setting, we provide a unified analysis of the convergence properties of the Ravine method subject to noise in the gradient computation over a large class of the extrapolation sequence parameter settings beyond the standard ones for Nesterov's method. We will establish fast convergence rates in expectation and in almost sure sense on the objective values (both in $\mathcal{O}(\cdot)$ and $o(\cdot)$), on the gradient, and prove weak convergence of the sequence of iterates. This latter aspect is  overlooked by many existing works that focus exclusively on complexity estimates. These results will highlight the trade-off between the decrease of the error variance and fast convergence of the values and gradients. Our results cover some of those reviewed above as special cases for the Nesterov and heavy ball method extrapolation coefficients. In fact, even for these special cases, we complement the results of the literature with new ones. Moreover, we are not aware of any such a work for the Ravine method nor with general extrapolation coefficients.

\subsection{A model result}
Taking $\gamma_k = 1 -\frac{\alpha}{k}$ yields optimal convergence rate of the values and fast convergence of the gradients towards zero. Specifically, let the sequence $\seq{y_k}$ generated by the stochastic Ravine method with constant step-size
\begin{equation*}
\begin{cases}
w_k =  y_k - s (\nabla f(y_k) + e_k) \vspace{2mm} \\
y_{k+1} = w_k + \left(1 -\frac{\alpha}{k}\right)  \left(w_k - w_{k-1}\right), \rule{0pt}{5pt} \hspace{3cm}
\end{cases}
\end{equation*}
where $s \in ]0,1/L]$, $\seq{e_k}$ is a zero-mean stochastic noise. Let $\filt_k$ be the sub-$\sigma$-algebra generated by $y_0$ and $(w_{i})_{i \leq k-1}$. If $\alpha >3$, $\EX{e_k}{\filt_k}=0$ and $\sum_{k=1}^{+\infty} k\EX{\|e_k|\|^2}{\filt_k}^{1/2} < + \infty$ almost surely, then according to Theorem~\ref{thm:SRAG} and \ref{thm:SRAGgrad}, the following convergence properties hold: 
\begin{center}
\[
f(y_k)-  \min_{\cH} f = o \left(\frac{1}{k^2}\right) \quad \text{and} \quad \sum_k  k^2 \|\nabla f (y_k) \|^2 < +\infty \quad \text{almost surely} .
\]
\end{center}
In addition, if $\cH$ is also separable\footnote{Separability is crucial for proving almost sure weak convergence of the sequence of iterates.}, then the sequence $\seq{y_k}$ converges weakly almost surely to a random variable valued in $\argmin(f)$. Our results in Section~\ref{sec:special_cases} will be established for a much larger lass of the extrapolation sequence beyond $1-\alpha/k$. In particular, these results will emphasize the trade-off between the decrease of the error variance and fast convergence of the values and gradients.

\subsection{Contents} 
In Section~\ref{sec:Nest-Rav}, we start by making the link between the Ravine and the Nesterov method. This is instrumental because it makes it possible to transfer some known results of the Nesterov method. Then, in Section~\ref{sec:dyn} we show that the high resolution ODE of the Ravine method exhibits the damping governed by the Hessian. Section~\ref{sec:convergence} is devoted to the study of the convergence properties of the stochastic Ravine method, with as an important result the fast convergence in mean of the gradients towards zero. Section~\ref{sec:special_cases} contains illustrations of our results for various special choices of the extrapolation (inertial) sequence $\gamma_k$. Finally we conclude and draw some perspectives.

\section{Comparison of the Nesterov and Ravine methods}\label{sec:Nest-Rav}

Let us first recall some basic facts concerning the Nesterov method.

\subsection{Nesterov accelerated gradient method}
The Nesterov Accelerated Gradient (NAG for short) method with general extrapolation coefficients $\seq{\alpha_k}$, as studied in \cite{AC2R-JOTA}, reads
\begin{equation}\label{eq:NAGalphak}\tag{${\rm (NAG)}_{\alpha_k}$}
\begin{cases}
y_k =   x_{k} + \alpha_k ( x_{k}  - x_{k-1}) \\
\rule{0pt}{12pt}
x_{k+1} =  y_k- s_k \nabla f (y_k).
\end{cases}
\end{equation}
Its central role in optimization is due to the fact that a wise choice of the coefficients $\seq{\alpha_k}$ provides an optimal convergence rate of the values (in the worst case). 

Specifically, taking $\alpha_k= 1 -\frac{\alpha}{k}$ gives a scheme
%
which, for $\alpha \geq 3$, generates iterates $\seq{x_k}$ satisfying
\begin{equation}\label{Nest_conv_rate}
f(x_k) - \min_{\cH} f =\mathcal O \left( \frac{1}{k^2} \right)
\mbox{ as } k \to +\infty,
\end{equation}
and the fast convergence towards zero of the gradients (see \cite{AF})
\[
\sum_k  k^2 \| \nabla f (x_k) \|^2 < +\infty  .
\]
In addition, when $\alpha > 3$,
\begin{equation}\label{Nest_conv_rate_b}
f(x_k) - \min_{\cH} f =o \left( \frac{1}{k^2} \right)
\mbox{ as } k \to +\infty, \; \sum_k  k( f (x_{k})-  \min_{\cH} f )  < +\infty 
\end{equation}
and there is weak convergence of the iterates $\seq{x_k}$ to optimal solutions,
 see \cite{ACPR,AC1,AC2,AP,SBC}.

\subsection{Passing from Nesterov method to Ravine method and vice versa}
 
To avoid confusion between the two algorithms \ref{eq:RAGgammak} and \ref{eq:NAGalphak}, we use the  subscript $\gamma_k$ for the extrapolation coefficient in the Ravine method, and $\alpha_k$ for the extrapolation coefficient in the Nesterov method. 
A remarkable fact is that the variable $y_k$ which enters the definition of \ref{eq:NAGalphak} follows the \ref{eq:RAGgammak} algorithm, with $\gamma_k = \alpha_{k+1}$. This generalizes the observation already made in \cite{AF} for the specific choice $\alpha_k=1 -\frac{\alpha}{k}$.
Although this is an elementary result, we give a detailed account of it in the following theorem, due to its importance. 

\begin{theorem}\label{Nest_Ravine}
\begin{enumerate}[(i)]
\item Let $\seq{x_k}$ be the sequence generated by the Nesterov algorithm \ref{eq:NAGalphak}.
Then the associated sequence $\seq{y_k}$ 
also follows the equations of the Ravine algorithm \ref{eq:RAGgammak} with $\gamma_k = \alpha_{k+1}$

\item Conversely, if $\seq{y_k}$ is the sequence associated to the Ravine method \ref{eq:RAGgammak},
then the sequence $\seq{x_k}$ defined by $x_{k+1} \eqdef y_k - s_k \nabla f (y_k)$ follows the Nesterov algorithm \ref{eq:NAGalphak} with $\alpha_k = \gamma_{k-1}$.
\end{enumerate}
\end{theorem}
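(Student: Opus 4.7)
\noindent The proof is essentially a two-line algebraic reshuffling of the defining equations, so the plan is to make the key substitution explicit and then verify each half by direct inspection.

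For part (i), I would start from the Nesterov iterates $\seq{x_k}$ generated by \ref{eq:NAGalphak} and introduce the auxiliary variable $w_k \eqdef y_k - s_k\nabla f(y_k)$. The second line of \ref{eq:NAGalphak} immediately gives $w_k = x_{k+1}$, hence also $w_{k-1} = x_k$. Substituting these two identities into the first line of \ref{eq:NAGalphak} written at index $k+1$, namely $y_{k+1} = x_{k+1} + \alpha_{k+1}(x_{k+1} - x_k)$, yields $y_{k+1} = w_k + \alpha_{k+1}(w_k - w_{k-1})$, which is precisely the second equation of \ref{eq:RAGgammak} with $\gamma_k = \alpha_{k+1}$. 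Together with the definition $w_k = y_k - s_k\nabla f(y_k)$ this establishes that $\seq{y_k, w_k}$ follows the Ravine scheme.

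For part (ii), I would proceed in the reverse direction. Given the Ravine iterates $\seq{y_k, w_k}$ and setting $x_{k+1} \eqdef y_k - s_k \nabla f(y_k)$, the first line of \ref{eq:RAGgammak} says exactly $x_{k+1} = w_k$. Hence $x_k = w_{k-1}$, and rewriting the extrapolation step $y_{k+1} = w_k + \gamma_k(w_k - w_{k-1})$ at index $k$ (i.e.\ $y_k = w_{k-1} + \gamma_{k-1}(w_{k-1} - w_{k-2})$) gives $y_k = x_k + \gamma_{k-1}(x_k - x_{k-1})$, which is the first line of \ref{eq:NAGalphak} with $\alpha_k = \gamma_{k-1}$. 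The second line of \ref{eq:NAGalphak} then holds by the very definition of $x_{k+1}$.

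There is no real obstacle here beyond bookkeeping of indices: the substance is simply that the Ravine and Nesterov schemes are two different ``views'' of the same trajectory, one tracking the extrapolated point $y_k$ and the other tracking the gradient-step output $x_{k+1}$, with a one-step shift in the inertia index. I would present the argument as a short, symmetric two-column verification. To avoid any risk of ambiguity I would also note at the end that the initialization is consistent: in (i), one can set $w_{-1} \eqdef x_0$ so that the Ravine recursion is well-defined from $k=0$, and symmetrically in (ii), one can set $x_0 \eqdef w_{-1}$, ensuring that both correspondences are exact at every step.
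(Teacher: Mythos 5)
Your proof is correct and follows essentially the same route as the paper's: both arguments rest on the identification $w_k = x_{k+1}$ (equivalently $x_{k+1} = y_k - s_k\nabla f(y_k)$) and the one-step index shift in the extrapolation coefficient, with the paper merely writing the substitution out explicitly in terms of $y_k$ and the gradients. Your added remark on consistent initialization is a harmless refinement not present in the paper.
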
 
\begin{proof}
\begin{enumerate}[(i)]
\item Suppose that $\seq{x_k}$  follows  \ref{eq:NAGalphak}. According to the definition of $y_k$
\begin{eqnarray*}
y_{k+1}&=&   x_{k+1} +\alpha_{k+1} ( x_{k+1}  - x_{k})\\
&=&  y_k- s_k\nabla f (y_k) + \alpha_{k+1}  \Big( y_k- s_k\nabla f (y_k)  - ( y_{k-1}- s_{k-1}\nabla f (y_{k-1}) )\Big).
\end{eqnarray*}
Set $w_k \eqdef y_k - s_k\nabla f (y_k)$ (which is nothing but $x_{k+1}$). We obtain that $\seq{y_k}$ follows ${\rm(RAG)}_{\alpha_{k+1}}$,  \ie
\begin{equation*}{\rm (RAG)}_{\alpha_{k+1}}
\begin{cases}
w_k = y_k - s_k\nabla f (y_k) \\ \rule{0pt}{15pt}
y_{k+1} = w_k + \alpha_{k+1} \left( w_k - w_{k-1}\right).
\end{cases}
\end{equation*}
\item Conversely, suppose that $\seq{y_k}$ follows the Ravine method
\ref{eq:RAGgammak}. According to the definition of $y_{k+1}$ and $w_k$, we have 
\[
y_{k+1} = y_k - s_k\nabla f (y_k) + \gamma_k \Big(y_k - s_k\nabla f (y_k) -( y_{k-1}- s_{k-1}\nabla f (y_{k-1}))\Big).
\]
By definition of $x_{k+1} =  y_k - s_k \nabla f (y_k)$, we deduce that 
\[
y_{k+1} = x_{k+1} + \gamma_k \left(x_{k+1} - x_{k}\right).
\]
Equivalently
\[
y_{k} = x_{k} + \gamma_{k-1} \left(x_{k} - x_{k-1}\right).
\]
Putting together the above relations and the definition of $x_{k+1}$, we obtain that $\seq{x_k}$
follows ${\rm (NAG)}_{\gamma_{k-1}}$, \ie
\begin{equation*}{\rm (NAG)}_{\gamma_{k-1}} 
\begin{cases}
y_k = x_{k} + \gamma_{k-1} ( x_{k}  - x_{k-1}) \\
\rule{0pt}{12pt}
x_{k+1} = y_k - s_k \nabla f (y_k).
\end{cases}
\end{equation*}
This completes the proof. \qed
\end{enumerate}
\end{proof}

Though the two methods are intimately linked as we have just seen, it is only recent advances in the dynamical system interpretation of the two methods that revealed their close relationship and also their differences. This is  explained in the next section, where we consider the case of the Ravine method with general extrapolation coefficients, hence generalizing the work of \cite{AF} beyond the case $\alpha_k = 1-\alpha/k$.

\section{The Ravine method from a dynamic perspective}\label{sec:dyn}
We  consider   the high resolution ODE of the  Ravine method, and show that it exhibits damping governed by the Hessian. This will explain the fast convergence towards zero of the gradients satisfied by the Ravine method, a claim that we will prove in the next section.

\subsection{Dynamic tuning of the extrapolation coefficients}
Let us first explain how to tune the extrapolation coefficients in the Ravine method, in order to obtain a dynamic interpretation of the algorithm.
Critical to the understanding is the link between the Ravine method and the Nesterov method, as explained in Section~\ref{sec:Nest-Rav}, and the  dynamic interpretation of the Nesterov method, due to Su, Boyd and 
Cand\`es \cite{SBC}.
Consider  the inertial gradient system
\begin{equation*}\label{eq:igs}\tag{${\rm(IGS)}_{\gamma}$}
\ddot{x}(t) + \gamma(t) \dot{x}(t) + \nabla f (x(t))=0,
\end{equation*}
which involves a general viscous damping coefficient $\gamma(\cdot)$.
The implicit time discretization of \ref{eq:igs}, with time step-size $h>0$, $x_k= x(\tau_k)$, and $\tau_k=kh$ \footnote{We take the $\tau_k$ notation  instead of the usual  $t_k$, because $t_k$ will be used with a different meaning, and it is used extensively in the paper.}, gives
\[
\frac{ x_{k+1} - 2x_{k}+ x_{k-1} }{h^2} +   \gamma (kh) \frac{x_{k} - x_{k-1}}{h} + \nabla f( x_{k+1}) = 0.
\] 
Let $s \eqdef h^2$. After multiplication by $s$, we obtain
\begin{equation}
(x_{k+1} -x_{k})- (x_{k} -x_{k-1}) + h\gamma (kh)(x_{k} - x_{k-1}) + s \nabla f( x_{k+1})=0. \label{basic-eq-1}
\end{equation}
Equivalently
\begin{equation}
x_{k+1}  + s \nabla f( x_{k+1})= x_{k}+  \left( 1- h\gamma (kh)\right)(x_{k} -x_{k-1}) , \label{basic-eq-2}
\end{equation}
which gives
\begin{equation}
x_{k+1} = {\rm prox}_{s f} \left( x_{k}+  \left( 1- h\gamma (kh)\right)(x_{k} -x_{k-1})\right) . \label{basic-eq-3}
\end{equation}
We obtain the inertial proximal algorithm 
\begin{eqnarray*}
\begin{cases}
y_k=  \displaystyle{ x_{k}+  \left( 1- h\gamma (kh)\right)(x_{k} -x_{k-1})} \hspace{1cm} \vspace{1mm} \\
x_{k+1} = {\rm prox}_{s f} \left( y_{k} \right).
\end{cases}
\end{eqnarray*}
Following the general procedure described in \cite{AF}, which consists in  replacing the proximal step by a gradient step, we obtain \ref{eq:NAGalphak}
with $\alpha_k=  1- h\gamma (kh)$.
Taking $\gamma(t) =\frac{\alpha}{t}$, we obtain \ref{eq:NAGalphak} with $\alpha_k=1 -\frac{\alpha}{k}$, 
which provides fast convergence results.
Observe that Algorithm~\ref{eq:NAGalphak} makes sense for any arbitrarily given sequence of positive numbers $\seq{\alpha_k}$. But for this algorithm to be directly connected by temporal discretization to the continuous dynamic \ref{eq:igs}, it is necessary to take $\alpha_k=  1- h\gamma (kh)$.
Note that the case $\gamma(t) =\frac{\alpha}{t}$ is special, since due to the homogeneity property of $\gamma(\cdot)$, in this case $\alpha_k$ does not depend on $h$.

\smallskip

Let us now use  the relations established in Section~\ref{sec:Nest-Rav} between the Nesterov and the Ravine methods. 
Since $\seq{x_k}$  satisfies \ref{eq:NAGalphak} with  
$\alpha_k=  1- h\gamma (kh)$, we have that  the associated sequence $\seq{y_k}$ follows \ref{eq:RAGgammak} with $\gamma_k = \alpha_{k+1} = 1- h\gamma ((k+1)h)$.
%

\subsection{High resolution ODE of the Ravine method}
Let us now proceed with the high resolution ODE of the Ravine method \ref{eq:RAGgammak}.
The idea is  not to let $h \to 0$,  but to take into account  the  terms of order $h=\sqrt{s}$ in the asymptotic expansion, and to neglect the term of order $h^2=s$.
The high resolution  method is  extensively used in fluid mechanics, where physical phenomena occur at multiple scales.
Indeed, by following an approach  similar to that
developed by Shi, Du,  Jordan, and  Su in \cite{SDJS}, and Attouch and Fadili in \cite{AF}, we are going to show that  the Hessian-driven damping  appears in the associated continuous inertial equation. Let us make this precise in the following result.

\begin{theorem}\label{highresolution}
The high resolution ODE with temporal step size $h= \sqrt{s}$ of the Ravine method \ref{eq:RAGgammak} with $\gamma_k = h\gamma ((k+1)h)$
%
gives the inertial dynamic with Hessian driven damping
\begin{equation}\label{edo101}
\ddot{y}(t) + \gamma(t) \left(1+ \frac{\sqrt{s}}{2} \gamma (t)\right)  \dot{y}(t)  
+ \sqrt{s}\nabla^2 f (y(t)) \dot{y}(t)+ \left(1+ \frac{\sqrt{s}}{2} \gamma (t)\right) \nabla f (y(t))  =0.
\end{equation}
\end{theorem}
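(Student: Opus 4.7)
The plan is to eliminate $w_k$ from the Ravine iterations so as to obtain a second-order recurrence purely in the variable $y_k$, and then expand it around a smooth curve $y(\tau)$ via the identification $y_k = y(\tau_k)$, $\tau_k = kh$, $h = \sqrt{s}$, retaining all contributions up to order $h^3 = s\sqrt{s}$ and discarding $O(h^4) = O(s^2)$.

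First I would substitute $w_k = y_k - s\nabla f(y_k)$ and its shifted analogue into the extrapolation step of \ref{eq:RAGgammak}, which upon rearrangement yields the key recurrence
\begin{equation*}
y_{k+1} - 2y_k + y_{k-1} = (\gamma_k - 1)(y_k - y_{k-1}) - s(1 + \gamma_k)\nabla f(y_k) + s\gamma_k\nabla f(y_{k-1}).
\end{equation*}
This identity isolates the inertial term on the left and exposes on the right both a viscous friction contribution via $(\gamma_k - 1)(y_k - y_{k-1})$ and, crucially, the discrete gradient correction $s\gamma_k\bigl(\nabla f(y_k) - \nabla f(y_{k-1})\bigr)$, which is precisely the origin of the Hessian-driven damping in the continuous limit.

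Next, I would Taylor expand each ingredient around $\tau_k$: the centered difference gives $y_{k+1} - 2y_k + y_{k-1} = h^2\ddot{y} + O(h^4)$; the backward difference gives $y_k - y_{k-1} = h\dot{y} - \tfrac{h^2}{2}\ddot{y} + O(h^3)$; and by $L$-smoothness of $\nabla f$, $\nabla f(y_{k-1}) = \nabla f(y_k) - h\nabla^2 f(y(\tau_k))\dot{y} + O(h^2)$. Using $\gamma_k = 1 - h\gamma(\tau_k) + O(h^2)$, substituting these in and collecting by order, the $h^3\gamma\nabla f(y_k)$ contributions cancel exactly between the $-s(1+\gamma_k)\nabla f(y_k)$ and $s\gamma_k\nabla f(y_{k-1})$ pieces, while the discrete gradient difference leaves the surviving $-h^3\nabla^2 f(y)\dot{y}$. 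After dividing through by $h^2 = s$, this reduces to
\begin{equation*}
\left(1 - \tfrac{\sqrt{s}}{2}\gamma\right)\ddot{y} + \gamma\dot{y} + \sqrt{s}\,\nabla^2 f(y)\dot{y} + \nabla f(y) = O(s).
\end{equation*}

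Finally, I would multiply through by $\bigl(1 + \tfrac{\sqrt{s}}{2}\gamma\bigr)$, using the identity $\bigl(1 - \tfrac{\sqrt{s}}{2}\gamma\bigr)\bigl(1 + \tfrac{\sqrt{s}}{2}\gamma\bigr) = 1 - \tfrac{s}{4}\gamma^2 = 1 + O(s)$ to normalize the coefficient of $\ddot{y}$ to unity modulo $O(s)$, thereby arriving at the stated high-resolution ODE \eqref{edo101}. The main obstacle is the bookkeeping: one has to propagate all Taylor remainders consistently to order $h^3$ and verify the precise cancellation of the $h^3\gamma\nabla f(y)$ contributions, since this cancellation is exactly the algebraic mechanism that promotes the finite-difference gradient correction into the continuous Hessian term $\sqrt{s}\nabla^2 f(y)\dot{y}$, distinguishing the Ravine high-resolution ODE from a naive first-order continuous limit.
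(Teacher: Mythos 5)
Your proposal takes essentially the same route as the paper: eliminate $w_k$ to obtain a second-order recurrence in $y_k$ alone (your displayed recurrence is exactly the paper's identity before division by $s$), Taylor expand on a uniform grid keeping terms through order $h^3$, note the exact cancellation of the $h^3\gamma\nabla f(y_k)$ contributions and the survival of $-h^3\nabla^2 f(y)\dot y$, and finally normalize the coefficient of $\ddot y$; your multiplication by $1+\tfrac{\sqrt{s}}{2}\gamma$ and the paper's division by $1-\tfrac{h}{2}\gamma$ agree modulo $O(s)$, and your intermediate equation $\bigl(1-\tfrac{\sqrt{s}}{2}\gamma\bigr)\ddot y+\gamma\dot y+\sqrt{s}\,\nabla^2 f(y)\dot y+\nabla f(y)=O(s)$ is equivalent to \eqref{eq:discrete_28_12_2021} after substituting $\gamma_k=1-h\gamma((k+1)h)$ and clearing the common factor $1-\tfrac{h}{2}\gamma$.

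There is, however, one bookkeeping slip at precisely the order you claim to retain. You center the expansion at $\tau_k=kh$ and write $\gamma_k=1-h\gamma(\tau_k)+O(h^2)$. Since in fact $\gamma_k=1-h\gamma((k+1)h)=1-h\gamma(\tau_k)-h^2\dot\gamma(\tau_k)+O(h^3)$, the discarded $O(h^2)$ piece, multiplied by $y_k-y_{k-1}=O(h)$ in the friction term, produces an $O(h^3)$ contribution to the recurrence, hence an extra $\sqrt{s}\,\dot\gamma(t)\dot y(t)$ in the limit equation after dividing by $h^2$. This term is of the same order $\sqrt{s}$ as the Hessian damping and the $\tfrac{\sqrt{s}}{2}\gamma(t)^2\dot y(t)$ correction that \eqref{edo101} does keep (for $\gamma(t)=\alpha/t$ it equals $-\tfrac{\sqrt{s}}{\alpha}\gamma(t)^2\dot y(t)$, so it cannot be dismissed as lower order), and it does not appear in the stated ODE. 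This is exactly why the paper expands around $\tau_k=(k+c)h$ with an adjustable shift and then takes $c=1$: with that centering, $\gamma_k-1=-h\gamma(\tau_k)$ holds exactly and no $\dot\gamma$ term is generated. Your argument is repaired simply by centering the expansion at $\tau_k=(k+1)h$ (equivalently, absorbing the one-step time shift into the time variable of the limit ODE); with that correction the remainder of your computation is sound.
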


\begin{proof}
Set $\gamma_k = 1- h\gamma ((k+1)h)$.
By definition of the Ravine method
$$
y_{k+1} = y_k- s\nabla f (y_k) + \gamma_k \Big (y_k- s\nabla f (y_k))- ( y_{k-1}- s\nabla f (y_{k-1}))   \Big) .
$$
Equivalently
$$
(y_{k+1}-y_k)- (y_k- y_{k-1}) +(1- \gamma_k) (y_k- y_{k-1})+     s\nabla f (y_k) + s\gamma_k \Big (\nabla f (y_k)-\nabla f (y_{k-1})   \Big)=0 .
$$
After dividing by $s=h^2$,  we obtain
\begin{equation}\label{dyn-Rav-1c}
\frac{ y_{k+1} - 2y_{k}+ y_{k-1} }{h^2}  + (1- \gamma_k) \frac{y_k- y_{k-1}}{h^2} +\nabla f (y_k)
+ \gamma_k( \nabla f (y_k) -\nabla f (y_{k-1} )) =0.
\end{equation}
Notice then that 
$$
\frac{y_k- y_{k-1}}{h^2} = \frac{y_{k+1} -y_k}{h^2} - \frac{ y_{k+1} - 2y_{k}+ y_{k-1} }{h^2}. 
$$
So, \eqref{dyn-Rav-1c} can be  formulated equivalently as follows
\begin{equation}\label{dyn-Rav-1cc}
\gamma_k \frac{ y_{k+1} - 2y_{k}+ y_{k-1} }{h^2}  + (1- \gamma_k) \frac{y_{k+1}- y_{k}}{h^2} +\nabla f (y_k)
+ \gamma_k( \nabla f (y_k) -\nabla f (y_{k-1} )) =0.
\end{equation}
After dividing  by $\gamma_k$,  we get 
\begin{equation}\label{dyn-Rav-1cd}
 \frac{ y_{k+1} - 2y_{k}+ y_{k-1} }{h^2}  +
  \frac{1- \gamma_k}{h\gamma_k} \frac{y_{k+1}- y_{k}}{h} 
  + \frac{1}{\gamma_k}\nabla f (y_k)
+ ( \nabla f (y_k) -\nabla f (y_{k-1} )) =0.
\end{equation}
\noindent Building on \eqref{dyn-Rav-1cd}, we use Taylor expansions taken at a higher order (here,  order four) than for the low resolution ODE. 
For each $k\in\N$, set $\tau_k=(k +c)h $, where $c$ is a real parameter that will be adjusted further.
Assume that  $y_k = Y(\tau_k)$ for some  smooth curve  $\tau \mapsto Y(\tau)$ defined for $\tau\geq t_0 >0$. Performing a Taylor expansion in powers of $h$, when $h$ is close  to zero, of the different quantities involved in \eqref{dyn-Rav-1cd}, we obtain
\begin{eqnarray}
y_{k+1} &=& Y(\tau_{k+1})= Y(\tau_k) + h \dot{Y}(\tau_k)+ \demi h^2 \ddot{Y}(\tau_k) + \frac{1}{6}h^3 \dddot{Y}(\tau_k)+ 
\mathcal O (h^4) \label{taylor1c}\\
y_{k-1} &=& Y(\tau_{k-1})= Y(\tau_k) - h \dot{Y}(\tau_k)+ \demi h^2 \ddot{Y}(\tau_k)  - \frac{1}{6}h^3 \dddot{Y}(\tau_k)+ 
\mathcal O (h^4) . \label{taylor2c}
\end{eqnarray}
By adding \eqref{taylor1c} and \eqref{taylor2c}  we obtain 
$$
\frac{ y_{k+1} - 2y_{k}+ y_{k-1} }{h^2} =   \ddot{Y}(\tau_k) + \mathcal O (h^2).
$$
Moreover, \eqref{taylor1c} gives
$$
\frac{y_{k+1} - y_{k}}{h} =   \dot{Y}(\tau_k) + \demi h \ddot{Y}(\tau_k)  + \mathcal O (h^2).
$$
\noindent  By Taylor expansion of $\nabla f$  we have
$$ \nabla f (y_k) -\nabla f (y_{k-1})=   
 h\nabla^2 f (Y(\tau_k)) \dot{Y}(\tau_k) + \mathcal O \left(h^2\right).
$$
Plugging all of the above results into \eqref{dyn-Rav-1cd}, we obtain
\begin{multline*}
[\ddot{Y}(\tau_k)+\mathcal O (h^2)  ] +  \frac{1- \gamma_k}{h\gamma_k} \left[\dot{Y}(\tau_k)+ \demi h \ddot{Y}(\tau_k) 
 + \mathcal O (h^2) \right] \\
 +  \frac{1}{\gamma_k} \nabla f (Y(\tau_k)) 
+\left[ h\nabla^2 f (Y(\tau_k)) \dot{Y}(\tau_k) + \mathcal O \left( h^2\right)\right] =0.
\end{multline*}
Multiplying  by $\frac{h\gamma_k}{1- \gamma_k}$, we obtain in an equivalent way 
$$
\frac{h\gamma_k}{1- \gamma_k} \ddot{Y}(\tau_k) +  \dot{Y}(\tau_k) + \demi h \ddot{Y}(\tau_k) + \frac{h}{1-\gamma_k}\nabla f (Y(\tau_k)) 
+\frac{h^2\gamma_k}{1- \gamma_k}\nabla^2 f (Y(\tau_k)) \dot{Y}(\tau_k) +\mathcal O (h^3)  =0.
$$
Afterreduction of the terms involving $\ddot{Y}(t_k)$,  we obtain 
\begin{eqnarray*}
&&\frac{h(1+ \gamma_k)}{2(1-\gamma_k)}\ddot{Y}(\tau_k)+ \dot{Y}(\tau_k) +  \frac{h}{1-\gamma_k}\nabla f (Y(\tau_k)) 
+\frac{h^2\gamma_k}{1- \gamma_k}\nabla^2 f (Y(\tau_k)) \dot{Y}(\tau_k) +\mathcal O (h^3)  =0.
\end{eqnarray*}
Multiplication by $\frac{2(1-\gamma_k)}{h(1+ \gamma_k)}$ then yields
\begin{eqnarray}
&&\ddot{Y}(\tau_k)+ \frac{2(1-\gamma_k)}{h(1+ \gamma_k)} \dot{Y}(\tau_k) +  \frac{2}{1+\gamma_k}\nabla f (Y(\tau_k)) 
+\frac{2h \gamma_k}{1+\gamma_k}\nabla^2 f (Y(\tau_k)) \dot{Y}(\tau_k) +\mathcal O (h^2)  =0. \label{eq:discrete_28_12_2021}
\end{eqnarray}
According to $\gamma_k = 1- h\gamma ((k+1)h)$, and $\tau_k= (k+1)h$, we obtain 
\begin{equation*}
\ddot{Y}(\tau_k) + \frac{\gamma (\tau_k)}{1-\frac{h}{2} \gamma (\tau_k)} \dot{Y}(\tau_k) + 
\frac{1}{1-\frac{h}{2} \gamma (\tau_k)} \nabla f (Y(\tau_k)) 
+ h \frac{1- h\gamma (\tau_k)}{1- \frac{h}{2} \gamma (\tau_k)} \nabla^2 f (Y(\tau_k)) \dot{Y}(\tau_k) +\mathcal O (h^2)  =0.
\end{equation*}
By neglecting the term of order $s=h^2$, and keeping the terms of order 
$h= \sqrt{s}$, we obtain the inertial dynamic with
Hessian driven damping
$$
\ddot{Y}(t) + \gamma(t) \left(1+ \frac{\sqrt{s}}{2} \gamma (t)\right)  \dot{Y}(t)  
+ \sqrt{s}\nabla^2 f (Y(t)) \dot{Y}(t)+ \left(1+ \frac{\sqrt{s}}{2} \gamma (t)\right) \nabla f (Y(t))  =0.
$$
This completes the proof.  \qed
\end{proof}

\begin{remark}{
The high resolution ODE of the Ravine method exhibits Hessian driven damping. In addition, it incorporates a gradient correcting term weighted with a coefficient of $\left(1+ \frac{\sqrt{s}}{2} \gamma (t)\right)$. This is in accordance with \cite{AF} and \cite{SDJS}.
Surprisingly, there is also a correction which appears in the viscosity term, the coefficient  $\left(1+ \frac{\sqrt{s}}{2} \gamma (t)\right)  $ in front of the velocity.
Indeed as we already observed,  the Nesterov case is very specific. When $\gamma (t)=\frac{\alpha}{t}$,  we have 
$s = 1- h\gamma ((k+1)h)= 1- \frac{\alpha}{k+1}$.
Returning to \eqref{eq:discrete_28_12_2021}, we have}
$$
\frac{2(1-s)}{h(1+ s)} = \frac{\alpha}{h(k+1 -\frac{\alpha}{2})}.
$$
{\rm Taking $\tau_k = h(k+1 -\frac{\alpha}{2})$
gives $\gamma (\cdot )$ as the viscosity coefficient of the 
limit equation.}
\end{remark}


\section{Convergence properties of the stochastic Ravine method}\label{sec:convergence}
In this section, we analyze the convergence properties of the Ravine method with stochastic errors in the eveluation of the gradients. We first examine the fast convergence of the values and the convergence of iterates, then we show the fast convergence of the gradients towards zero. This section considers the algorithmic and stochastic version of the results obtained by the authors for the corresponding continuous dynamical systems with deterministic errors \cite{AFK}.

\subsection{Values convergence rates and convergence of the iterates}
We first start by proving the results for the Nesterov method before transferring them to the Ravine method thanks to Theorem~\ref{Nest_Ravine}. In \cite{AC2R-JOTA}, the Nesterov accelerated gradient method with a general extrapolation coefficient $\alpha_k$ and deterministic terms was studied. Here, we consider a stochastic version which reads for $k \geq 1$ 
\begin{equation}\tag{${\rm(SNAG)}_{\alpha_k}$}\label{eq:NAGalphakstoch}
\begin{cases}
y_k=x_k+\alpha_k(x_k-x_{k-1})\\\rule{0pt}{15pt}
x_{k+1}= y_k - s_k (\nabla f(y_k) + e_k)
\end{cases}
\end{equation}
where $s_k \in ]0,1/L]$ is a sequence of step-sizes, $\seq{e_k}$ is a sequence of $\cH$-valued random variables. \ref{eq:NAGalphakstoch} is initialized with $x_0=x_1$, where $x_0$ a $\cH$-valued, squared integrable random variable.

Taking the objective function $f\equiv 0$ and $e_k \equiv 0$ in \ref{eq:NAGalphakstoch} already reveals insights for choosing the best parameters. In this case, the algorithm \ref{eq:NAGalphakstoch} becomes $\xkp-\xk-\alpha_k(\xk-\xkm)=0.$
This implies that for every $k\geq 1$,
\[
x_k=x_1+\left(\sum_{i=1}^{k-1}\prod_{j=1}^i \alpha_j\right)(x_1-x_0).
\]
Therefore, $\seq{x_k}$ converges if and only if $\sum_{i=1}^{+\infty}\prod_{j=1}^i \alpha_j <+\infty$.
We are naturally led to introduce the sequence $\seq{t_k}$ defined by
\begin{equation}\label{def:tk}
t_k \eqdef 1+\sum_{i=k}^{+\infty}\prod_{j=k}^i \alpha_j.
\end{equation}
The above formula may seem complicated at a first glance. In fact,  the inverse transform, which makes it possible to pass from $ t_k $ to 
 $ \alpha_k $ has the following, simpler form 
\begin{equation}\label{inverse_tk}
\alpha_k = \frac{t_k -1}{t_{k+1}}.
\end{equation}
Formula \eqref{inverse_tk}  will ease the path of the analysis and we shall make regular use of it
in the sequel.
%
%

\medskip

From now on, we denote by $\prspace$ a probability space. We assume that $\cH$ is a real separable Hilbert space endowed with its Borel $\sigma$-algebra, $\borel\pa{\cH}$. We denote a filtration on $\prspace$ by $\Filt \eqdef \seq{\filt_k}$ where $\filt_k$ is a sub-$\sigma$-algebra satisfying, for each $k \in \N$, $\filt_k\subset \filt_{k+1}\subset\sigalg$. Furthermore, given a set of random variables $\set{a_0,\ldots,a_k}$ we denote by $\sigma\pa{a_0,\ldots,a_k}$ the $\sigma$-algebra generated by $a_0,\ldots,a_k$. Finally, a statement $\pa{P}$ is said to hold $\Pas$ if 
\[
\prob\pa{\set{\omega\in\events: \pa{P}\mbox{ holds}}}=1.
\]
Using the above notation, we denote the canonical filtration associated to the iterates of algorithm \ref{eq:NAGalphakstoch} as $\Filt$ with, for all $k\in\N$,
\[
\filt_k \eqdef \sigma\pa{x_0,\ldots,x_k}
\]
such that all iterates up to $x_k$ are completely determined by $\filt_k$. 

For the remainder of the paper, all equalities and inequalities involving random quantities should be understood as holding $\Pas$ even if it is not explicitly written. 

\begin{definition}
Given a filtration $\Filt$, we denote by $\ell_+\pa{\Filt}$ the set of sequences of $[0,+\infty[$-valued random variables $\seq{a_k}$ such that, for each $k\in\N$, $a_k$ is $\filt_k$-measurable. Then, for $p \in ]0,+\infty[$, we also define the following set of $p$-summable random variables,
\[
\ell^p_+\pa{\Filt} \eqdef \set{\seq{a_k}\in\ell_+\pa{\Filt}:\sum\limits_{k\in\N} a_k^p <+\infty \; \Pas}.
\]
The set of non-negative $p$-summable (deterministic) sequences is denoted $\ell^p_+$.
\end{definition}

The following theorem is a generalization of \cite[Theorems~3.1, 3.2 and 3.4]{AC2R-JOTA} to the stochastic setting.
\begin{theorem}\label{thm:SNAG}
Assume that \eqref{assum:H} holds and the sequence $\seq{\alpha_k}$ satisfies 
\begin{align}
\forall k \geq 1,  \quad  \displaystyle{\sum_{i=k}^{+\infty} \prod_{j=k}^{i} \alpha_j <+\infty}, \tag{$K_0$}\label{eq:K0} \\
\forall k \geq 1,  \quad  t_{k+1}^2 -t_{k}^2 \leq t_{k+1} . \tag{$K_1$} \label{eq:K1}
\end{align}
Consider the algorithm \ref{eq:NAGalphakstoch} where $s_k \in ]0,1/L]$ is a non-increasing sequence and $\seq{e_k}$ is a sequence of stochastic errors such that
\begin{equation}\label{eq:K2}\tag{$K_2$}
\EX{e_k}{\filt_k} = 0 \; \Pas \quad \text{and} \quad \seq{s_k t_k \sigma_k} \in \ell^2_+(\Filt),
\end{equation}
where $\sigma_k^2 \eqdef \EX{\norm{e_k}^2}{\filt_k}$.
Then,
\begin{enumerate}[(i)] 
\item \label{thm:Thm-Nest-stoch-claim1}
we have the following rate of convergence in almost sure and mean sense:
\[
f(x_k)-  \min f = \mathcal O \pa{\frac{1}{s_k t_{k}^2}} \; \Pas , 
\] 
and
\[
\E{f(x_k)-  \min f} \leq \frac{s_1t_1^2\E{f(x_0)-\min f} + \frac{1}{2}\E{\distS{x_0}^2} + 4\sum_{i=1}^{+\infty} s_i^2 t_i^2\E{\norm{e_i}^2}}{s_k t_k^2} .
\]

\item \label{thm:Thm-Nest-stoch-claim2}
Assume in addition that, for $m\in [0,1[$, 
\begin{equation}\tag{$K_1^+$}\label{eq:K1+} 
t_{k+1}^2-t_k^2\leq m\,t_{k+1}\quad \mbox{ for every } k\geq 1 ,
\end{equation}
then 
\[
\sum_{k \in \N} s_k t_{k+1}(f(x_k)-\min f) < +\infty \quad \text{and} \quad \sum_{k \in \N} t_k \norm{x_k-x_{k-1}}^2 < +\infty \; \Pas .
\]
If moreover $\sum_{k \in \N} \frac{t_{k+1}}{t_k^2} = +\infty$, then
\[
f(x_k)-\min f = o\pa{\frac{1}{s_k t_k^2}} \quad \text{and} \quad \norm{x_k-x_{k-1}} = o\pa{\frac{1}{t_k}} \; \Pas . 
\]

\item \label{thm:Thm-Nest-stoch-claim3}
If $\alpha_k\in [0,1]$ for every $k\geq 1$, $\inf_k s_k > 0$, \eqref{eq:K1+} holds and \eqref{eq:K2} is strengthened to
\begin{equation}\label{eq:K2+}\tag{$K_2^+$}
\EX{e_k}{\filt_k} = 0 \; \Pas \quad \text{and} \quad \seq{s_k t_k\sigma_k} \in \ell^1_+(\Filt),
\end{equation}
then the sequence $\seq{x_k}$ converges weakly $\Pas$ to an $\argmin(f)$-valued random variable.
\end{enumerate}
\end{theorem}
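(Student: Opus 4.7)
The strategy is a stochastic Lyapunov analysis coupled with the Robbins--Siegmund quasi-supermartingale lemma, followed by a stochastic Opial argument for (iii). Fix $x^* \in \argmin(f)$ and introduce the momentum variable $z_k \eqdef x_{k-1} + t_k(x_k - x_{k-1})$, chosen so that the identity $\alpha_k = (t_k-1)/t_{k+1}$ from \eqref{inverse_tk} allows us to rewrite the update $x_{k+1} = y_k - s_k(\nabla f(y_k)+e_k)$ as a clean recursion on $z_k$. Define the Lyapunov energy
\[
\mathcal{E}_k \eqdef s_k t_k^2 \pa{f(x_k)-\min f} + \frac{1}{2}\norm{z_k-x^*}^2 .
\]
Combining the $L$-descent inequality at $y_k$ for the perturbed step, convexity of $f$ at $y_k$ against $x_k$ with weight $1-1/t_{k+1}$ and against $x^*$ with weight $1/t_{k+1}$, and the expansion of $\norm{z_{k+1}-x^*}^2-\norm{z_k-x^*}^2$, a standard algebraic reduction yields
\[
\mathcal{E}_{k+1} \leq \mathcal{E}_k + s_k \pa{t_{k+1}^2-t_k^2-t_{k+1}}\pa{f(x_{k+1})-\min f} - 2 s_k t_{k+1}\dotp{e_k}{z_{k+1}-x^*} + 4 s_k^2 t_{k+1}^2\norm{e_k}^2 .
\]
Condition \eqref{eq:K1} makes the second term nonpositive; conditioning on $\filt_k$ kills the linear noise term via $\EX{e_k}{\filt_k}=0$, and \eqref{eq:K2} guarantees $\sum s_k^2 t_{k+1}^2 \sigma_k^2 < +\infty$ almost surely. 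Robbins--Siegmund then delivers almost sure convergence of $\mathcal{E}_k$, producing the $\mathcal O(1/(s_k t_k^2))$ rate $\Pas$; the mean bound of (i) follows by taking unconditional expectation and telescoping.

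For (ii), under the strengthened \eqref{eq:K1+} the previously nonpositive surplus now carries a strictly negative coefficient of order $(1-m)t_{k+1}$, so after shifting indices the Lyapunov recursion becomes
\[
\EX{\mathcal{E}_{k+1}}{\filt_k} + c\, s_k t_{k+1}\pa{f(x_k)-\min f} + c'\, t_k\norm{x_k-x_{k-1}}^2 \leq \mathcal{E}_k + 4 s_k^2 t_{k+1}^2\sigma_k^2 ,
\]
with constants $c,c' > 0$ depending on $m$; a second application of Robbins--Siegmund yields the two summabilities. The upgrade from $\mathcal O$ to $o$ proceeds via a discrete Opial-type argument: setting $a_k \eqdef s_k t_k^2(f(x_k)-\min f)$, the convergence of $\mathcal E_k$ forces $a_k$ to have a limit, while $s_k t_{k+1}(f(x_k)-\min f) = a_k \cdot t_{k+1}/t_k^2$ is summable and $\sum t_{k+1}/t_k^2 = +\infty$ force $\liminf a_k = 0$; hence $a_k \to 0$. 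The companion $\norm{x_k-x_{k-1}} = o(1/t_k)$ is obtained similarly from $\sum t_k\norm{x_k-x_{k-1}}^2 < +\infty$ by a monotonicity argument on $t_k \norm{x_k - x_{k-1}}^2$.

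For (iii), I will apply the stochastic Opial lemma in the separable Hilbert space $\cH$. Two items must be verified almost surely: (a) $\norm{x_k-x^*}$ converges for every $x^* \in \argmin(f)$, and (b) every weak cluster point of $\seq{x_k}$ lies in $\argmin(f)$. For (a), the almost sure boundedness of $\norm{z_k-x^*}$ from (i) together with \eqref{eq:K2+} and Cauchy--Schwarz makes $\sum s_k t_{k+1} |\dotp{e_k}{z_{k+1}-x^*}|$ summable $\Pas$, so the Lyapunov inequality becomes a pathwise quasi-supermartingale recursion for $\norm{z_k-x^*}^2$, which therefore converges $\Pas$. Since $\inf_k s_k > 0$ forces $t_k \to +\infty$ and $\norm{z_k-x_k} = (t_k-1)\norm{x_k-x_{k-1}} \to 0$ by (ii), we deduce convergence of $\norm{x_k-x^*}$. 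Item (b) follows from $f(x_k) \to \min f$ almost surely combined with weak lower-semicontinuity of $f$. Separability of $\cH$ is invoked to realize (a) simultaneously for a countable dense subset of $\argmin(f)$ on a single event of full probability, from which the general case follows by a standard density argument.

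The main obstacle is the algebraic Lyapunov computation: closing the recursion so that \eqref{eq:K1} (resp.\ \eqref{eq:K1+}) appears as the exact sign condition on the residual coefficient of $f(x_{k+1})-\min f$ requires a careful choice of $z_k$, systematic use of the inverse formula \eqref{inverse_tk}, and clean bookkeeping of the non-increasing step-size $s_k$ and of the stochastic cross-terms. A secondary subtlety, specific to (iii), is the need to verify Opial's conditions pointwise in $\omega$ on a single full-probability event, which is precisely what separability of $\cH$ enables through a countable dense subset of $\argmin(f)$; the transfer of this weak convergence result from \ref{eq:NAGalphakstoch} to \ref{eq:RAGgammak} will then be immediate through Theorem~\ref{Nest_Ravine}.
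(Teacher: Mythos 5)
Your overall architecture — the energy $s_kt_k^2(f(x_k)-\min f)+\tfrac12\norm{z_k-x^\star}^2$ with $z_k=x_{k-1}+t_k(x_k-x_{k-1})$, Robbins--Siegmund, then a pointwise Opial argument made uniform over $\argmin f$ by separability — is the same as the paper's, and your treatment of part (i) is essentially correct. However, parts (ii) and (iii) contain genuine gaps. In (ii), the term $c'\,t_k\norm{x_k-x_{k-1}}^2$ that you insert into the recursion does not come out of the $\mathcal{E}_k$ computation you set up for (i): the paper has to introduce a \emph{second} energy $W_k=s_k(f(x_k)-\min f)+\tfrac12\norm{x_k-x_{k-1}}^2$, use a descent-type inequality between consecutive iterates (\cite[Proposition~2.1]{AC2R-JOTA}), multiply by $t_{k+1}^2$, exploit $t_{k+1}^2-t_k^2-t_{k+1}^2(1-\alpha_k^2)=-2t_k+1\leq -t_k$, and apply Robbins--Siegmund a second time to get $\sum_k t_k\norm{x_k-x_{k-1}}^2<+\infty$. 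More seriously, your $o$-upgrade is flawed: convergence of $\mathcal{E}_k$, a sum of two nonnegative terms, does \emph{not} force $a_k=s_kt_k^2(f(x_k)-\min f)$ to have a limit, and summability of $t_k\norm{x_k-x_{k-1}}^2$ plus a ``monotonicity argument'' only yields $\norm{x_k-x_{k-1}}=o(1/\sqrt{t_k})$, not $o(1/t_k)$ (the sequence $t_k\norm{x_k-x_{k-1}}^2$ is not monotone). The paper repairs both points simultaneously by showing that $t_k^2W_k$ converges $\Pas$ (Robbins--Siegmund on the $W$-recursion), that $\sum_k t_{k+1}W_k<+\infty$, hence $\liminf_k t_k^2W_k=0$ and so $t_k^2W_k\to 0$, which delivers both $o$-rates at once.

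In (iii), two of your steps fail. First, $\inf_k s_k>0$ does \emph{not} force $t_k\to+\infty$: the constant case $\alpha_k\equiv\alpha\in[0,1[$ (heavy ball) has $t_k\equiv 1/(1-\alpha)$ and is covered by the statement. Second, your transfer from convergence of $\norm{z_k-x^\star}$ to convergence of $\norm{x_k-x^\star}$ needs $(t_k-1)\norm{x_k-x_{k-1}}\to 0$, i.e.\ the $o(1/t_k)$ velocity rate, but the hypothesis $\sum_k t_{k+1}/t_k^2=+\infty$ is \emph{not} assumed in (iii); under \eqref{eq:K1+} and \eqref{eq:K2+} one only obtains boundedness of $t_k\norm{x_k-x_{k-1}}$ (from convergence of $t_k^2W_k$). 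Likewise, isolating convergence of $\norm{z_k-x^\star}^2$ from the pathwise convergence of $\mathcal{E}_k$ would require knowing that $s_kt_k^2(f(x_k)-\min f)$ converges, which is again unavailable without that extra divergence condition. The paper circumvents this by running the Opial anchor argument directly on $h_k=\tfrac12\norm{x_k-x^\star}^2$, via the difference inequality $h_{k+1}-h_k-\alpha_k(h_k-h_{k-1})\leq\norm{x_k-x_{k-1}}^2+\eta s_k\norm{e_k}$ (from \cite[Proposition~2.3]{AC2R-JOTA} and $\Pas$ boundedness of $\seq{x_k}$), multiplied by $t_{k+1}$, with positive parts, Jensen's inequality and Robbins--Siegmund giving $\seq{(h_k-h_{k-1})_+}\in\ell^1_+(\Filt)$ and hence convergence of $h_k$; your separability device to make this uniform over $\argmin f$ then coincides with the paper's.
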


Before delving into the proof, some remarks are in order.
\begin{remark}
From claim \eqref{thm:Thm-Nest-stoch-claim1}, we have, for $s_k$ constant and bounded away from $0$, convergence at the rate $O(1/k^2)$ in the objective if $\seq{t_k\sigma_k} \in \ell^2_+(\Filt)$. If just $\seq{\sigma_k} \in \ell^2_+(\Filt)$, then the step-size must anneal at the rate $s_k \sim 1/t_k$ for an objective value convergence rate $O(1/t_k)$. 

Now consider non-vanishing noise with bounded variance (i.e. $\lim\sup\sigma_k>0,\, \mathbb{P}-a.s.$ and $\mathbb{E}[\sigma^2_k]-\mathbb{E}[\sigma_k]^2\le B,\, 0<B<\infty$). For the choice of $t_k=(k-1)/(\alpha-1)$, setting the step-size to be $s_k=1/k^{1+\delta}$, with $\delta > 0$, results in convergence with a rate is $O(1/k^{\delta})$. If $s_k=1/k$ and the noise does not asymptotically vanish (a.s.), convergence can only be ensured to a noise dominated region. On the other hand, if $t_k=(k^{1+\delta}-1)/(\alpha-1)$ with $\delta<0$, then $s_k=1/k$ achieves a convergence rate of $O(1/k^{\delta})$ if there is vanishing noise. Continuing, we see that the $O(1/k^2)$ rate is achieved for vanishing noise and $s_k=1/k^{(2-\delta)}$.

The last statement of claim \eqref{thm:Thm-Nest-stoch-claim2} can be modified to get the same rate as in the deterministic case in \cite[Theorem~3.4]{AC2R-JOTA} but only at the price of a stronger summability assumption on the noise.
\end{remark}

\begin{proof}
Our proof is based on a (stochastic) Lyapunov analysis with appropriately chosen energy functionals. 

\noindent\eqref{thm:Thm-Nest-stoch-claim1} 
Denote $f_k(x) \eqdef f(x) + \dotp{e_k}{x}$ and recall $S = \argmin(f)$. Define the sequence
\begin{eqnarray*}
V_k \eqdef  s_k t_k^2( f(x_k)-  f( x^\star) ) +\frac{1}{2}\distS{z_k}^2  & \text{and} &
z_k \eqdef  x_{k-1} + t_k\pa{x_{k} - x_{k-1}} .
\end{eqnarray*}
Since $f$ is convex and $L$-smooth, so is $f_k$. Let us apply \eqref{eq:extdesclem} in Lemma~\ref{ext_descent_lemma} on $f_k$ successively at $y=y_k$ and $x= x_k$, then at $y=y_k$ and $x= x^\star \in S$. We get
\begin{align}
&f_k(x_{k+1}) \leq f_k(x_k) + \dotp{\nabla f_k(y_k)}{y_k-x_k} - \frac{s_k}{2}\norm{\nabla f_k(y_k)}^2 \label{eq.combinaison_eq_prec_1a}\\
&f_k(x_{k+1}) \leq f_k(x^\star) + \dotp{\nabla f_k(y_k)}{y_k-x^\star} - \frac{s_k}{2} \norm{\nabla f_k (y_k)}^2 . \label{eq.combinaison_eq_prec_1b}
\end{align}
Multiplying \eqref{eq.combinaison_eq_prec_1a} by $t_{k+1}-1$ (which is non-negative by definition), then adding the \eqref{eq.combinaison_eq_prec_1b}, we derive that
\begin{multline}\label{eq.combinaison_eq_prec_2}
t_{k+1}f_k(x_{k+1}) 
\leq (t_{k+1}-1)f_k(x_k) + f_k(x^\star) +  \dotp{\nabla f_k(y_k)}{(t_{k+1}-1)(y_k-x_k) + y_k - x^\star} \\
- \frac{s_k}{2}t_{k+1} \norm{\nabla f_k (y_k)}^2 .
\end{multline}
It is immediate to see, using \eqref{inverse_tk} and the definitions of $y_k$ and $z_k$, that
\begin{align*}
(t_{k+1}-1)(y_k-x_k) + y_k = x_k + t_{k+1}(y_k-x_k) 
&= x_{k-1} + (1+t_{k+1}\alpha_k)(x_k-x_{k-1}) \\
&= x_{k-1} + t_k(x_k-x_{k-1}) = z_k .
\end{align*}
Inserting this into \eqref{eq.combinaison_eq_prec_2} and rearranging, we get
\begin{align}\label{eq.combinaison_eq_prec_3}
t_{k+1}(f_k(x_{k+1}) - f_k(x^\star))
\leq (t_{k+1}-1)(f_k(x_k) - f_k(x^\star)) + \left\langle \nabla f_k(y_k), z_k - x^\star \right\rangle - \frac{s_k}{2}t_{k+1} \norm{\nabla f_k (y_k)}^2 .
\end{align}
Straightforward computation, using again \eqref{inverse_tk} and the definition of $y_k$ and $z_k$, can yield the expression,
\begin{equation}\label{eq:zkrec}
z_{k+1} - z_k = -s_k t_{k+1} \nabla f_k(y_k) .
\end{equation}
Thus
\[
\norm{z_{k+1} - x^\star}^2 = \norm{z_k - x^\star}^2 - 2s_k t_{k+1} \dotp{\nabla f_k(y_k)}{z_k-x^\star} + s_k^2t_{k+1}^2\norm{\nabla f_k(y_k)}^2 .
\]
Dividing this by $2$ and adding to \eqref{eq.combinaison_eq_prec_3}, after multiplying the latter by $s_k t_{k+1}$, cancels all terms containing $\nabla f(y_k)$ and we arrive at
\begin{align}\label{eq.combinaison_eq_prec_4}
s_k t_{k+1}^2(f_k(x_{k+1}) - f_k(x^\star)) + \frac{1}{2}\norm{z_{k+1}-x^\star}^2
\leq s_k t_{k+1}(t_{k+1}-1)(f_k(x_k) - f_k(x^\star)) + \frac{1}{2}\norm{z_k-x^\star}^2 .
\end{align}
Let us take $x^\star$ as the closest point to $z_k$ in $S$. Thus \eqref{eq.combinaison_eq_prec_4} is equivalent to
\begin{align}\label{eq.combinaison_eq_prec_5}
s_kt_{k+1}^2(f_k(x_{k+1}) - f_k(x^\star)) + \frac{1}{2}\distS{z_{k+1}}^2
\leq s_kt_{k+1}(t_{k+1}-1)(f_k(x_k) - f_k(x^\star)) + \frac{1}{2}\distS{z_k}^2 .
\end{align}
Let us now isolate the error terms. Inequality \eqref{eq.combinaison_eq_prec_5} is then equivalent to
\begin{multline}\label{eq.combinaison_eq_prec_6}
s_kt_{k+1}^2(f(x_{k+1}) - \min f) + \frac{1}{2}\distS{z_{k+1}}^2
\leq s_kt_{k+1}(t_{k+1}-1)(f(x_k) - \min f) + \frac{1}{2}\distS{z_k}^2 \\
- s_k\dotp{e_k}{t_{k+1}^2(x_{k+1}-x^\star)-t_{k+1}(t_{k+1}-1)(x_k-x^\star)} .
\end{multline}
We have
\[
t_{k+1}^2(x_{k+1}-x^\star)-t_{k+1}(t_{k+1}-1)(x_k-x^\star) = t_{k+1}(z_{k+1}-x^\star) .
\]
In turn, using also that $s_k$ is non-increasing, \eqref{eq.combinaison_eq_prec_6} becomes
\begin{multline*}
s_{k+1} t_{k+1}^2(f(x_{k+1}) - \min f) + \frac{1}{2}\distS{z_{k+1}}^2 + s_k(t_k^2 - t_{k+1}^2 + t_{k+1})(f(x_k)-\min f)
\leq \\ s_k t_k^2(f(x_k) - \min f) + \frac{1}{2}\distS{z_k}^2
- s_k t_{k+1}\dotp{e_k}{z_{k+1} - x^\star} .
\end{multline*}
In view of the definition of $V_k$, this is equivalent to
\begin{equation}\label{eq.combinaison_eq_prec_7}
V_{k+1} \leq V_k + s_k(t_{k+1}^2-t_{k+1}-t_k^2)(f(x_k)-\min f) + s_k t_{k+1}\dotp{e_k}{z_{k+1} - x^\star} .
\end{equation}
Taking the expectation conditionally on $\filt_k$ in \eqref{eq.combinaison_eq_prec_7}, we obtain
\begin{equation}\label{eq.combinaison_eq_prec_8}
\EX{V_{k+1}}{\filt_k} \leq V_k + s_k(t_{k+1}^2-t_{k+1}-t_k^2)(f(x_k)-\min f) - s_k t_{k+1}\EX{\dotp{e_k}{z_{k+1} - x^\star}}{\filt_k} .
\end{equation}
We have
\begin{align*}
\EX{\dotp{e_k}{z_{k+1} - x^\star}}{\filt_k} 
&= \EX{\dotp{e_k}{z_{k+1} - z_k}}{\filt_k} + \EX{\dotp{e_k}{z_k - z^\star}}{\filt_k} \\
&= -s_k t_{k+1}\EX{\dotp{e_k}{\nabla f_k(y_k)}}{\filt_k} = -s_k t_{k+1}\EX{\dotp{e_k}{\nabla f(y_k)+e_k}}{\filt_k} \\
&= -s_k t_{k+1}\EX{\norm{e_k}^2}{\filt_k}=-s_kt_{k+1}\sigma_k^2,
\end{align*}
where we used \eqref{eq:zkrec} in the second equality, and conditional unbiasedness (first part of \eqref{eq:K2}) in both the second and last inequalities, together with the fact that $y_k$, $z_k$ and $z^\star$ are deterministic conditionally on $\filt_k$. Plugging this into \eqref{eq.combinaison_eq_prec_8} yields
\begin{align}
\EX{V_{k+1}}{\filt_k} 
&\leq V_k + s_k(t_{k+1}^2-t_{k+1}-t_k^2)(f(x_k)-\min f) + s_k^2 t_{k+1}^2\sigma_k^2 \nonumber \\
&\leq V_k + s_k(t_{k+1}^2-t_{k+1}-t_k^2)(f(x_k)-\min f) + 4 s_k^2 t_k^2\sigma_k^2 , \label{eq.combinaison_eq_prec_9}
\end{align}
where we used that assumption \eqref{eq:K1} implies $t_{k+1} \leq 2t_k$; see \cite[Remark~3.3]{AC2R-JOTA}. 
Using again \eqref{eq:K1}, the second term in the rhs of \eqref{eq.combinaison_eq_prec_9} is non-positive and can then be dropped. Now, thanks to the second part of \eqref{eq:K2}, we are in position to apply Lemma~\ref{lem:RS} to \eqref{eq.combinaison_eq_prec_9} to see that $V_k$ converges $\Pas$, and consequently it is bounded $\Pas$. Thus, there exists a $[0,+\infty[$-valued random variable $\xi$ such that $\sup_{k \in \N} V_k \leq \xi < +\infty$ $\Pas$. Therefore, for all $k \geq 1$,
\[
s_k^2 t_{k}^2(f(x_k)-\min f) \leq V_k < +\infty \quad \Pas .
\]
Moreover, taking the total expectation in \eqref{eq.combinaison_eq_prec_9} and iterating gives
\begin{multline*}
s_k^2 t_{k}^2\E{f(x_k)-\min f} \leq \E{V_k} \leq \E{V_1} + 4\sum_{i=1}^{k} s_i^2 t_i^2\E{\norm{e_i}^2} \leq \\ s_1t_1^2\E{f(x_0)-\min f} + \frac{1}{2}\E{\distS{x_0}^2} + 4\sum_{i=1}^{+\infty} s_i^2 t_i^2\E{\norm{e_i}^2} < +\infty ,
\end{multline*}
where we used in the last inequality that $x_0=x_1$ by assumption, and that the rhs is finite thanks to Fubini-Tonelli's Theorem together with \eqref{eq:K2}. This proves the first claim in the theorem.

\smallskip

\noindent\eqref{thm:Thm-Nest-stoch-claim2} 
Using \eqref{eq:K1+} in \eqref{eq.combinaison_eq_prec_9}, we get
\[
\EX{V_{k+1}}{\filt_k}  \leq V_k - s_k(1-m)t_{k+1}(f(x_k)-\min f) + 4 s_k^2 t_k^2\sigma_k^2 .
\]
We can again invoke Lemma~\ref{lem:RS} to get that 
\begin{equation}\label{eq:sumfxk}
\sum_{k \geq 1} s_k t_{k+1}(f(x_k)-\min f) < +\infty \; \Pas .
\end{equation}
Let
\[
W_k \eqdef s_k(f(x_k) - \min f) + \frac{1}{2} \norm{x_k - x_{k-1}}^2 .
\]
Combining \cite[Proposition~2.1]{AC2R-JOTA} with the fact that $s_k$ is non-increasing, we have that
\[
W_{k+1} \leq W_k - \frac{1-\alpha_k^2}{2}\norm{x_k-x_{k-1}}^2 - s_k \dotp{e_k}{x_{k+1}-x_k} .
\]
Taking the expectation conditionally on $\filt_k$, we obtain
\begin{align}\label{eq.combinaison_eq_prec_10}
\EX{W_{k+1}}{\filt_k} \leq W_k - \frac{1-\alpha_k^2}{2}\norm{x_k-x_{k-1}}^2 - s_k \EX{\dotp{e_k}{x_{k+1}-x_k}}{\filt_k} .
\end{align}
We have
\begin{align*}
\EX{\dotp{e_k}{x_{k+1} - x_k}}{\filt_k} 
= \EX{\dotp{e_k}{x_{k+1} - y_k}}{\filt_k} 
= -s_k \EX{\dotp{e_k}{\nabla f_k(y_k)}}{\filt_k}
= -s_k \EX{\norm{e_k}^2}{\filt_k} ,
\end{align*}
where we used the algorithm update of $x_{k+1}$ in the second inequality, and conditional unbiasedness (first part of \eqref{eq:K2}) in the second and last inequalities together with $x_k$, $y_k$ being conditionally deterministic on $\filt_k$. Inserting this into \eqref{eq.combinaison_eq_prec_10} yields
\begin{align}\label{eq.combinaison_eq_prec_10'}
\EX{W_{k+1}}{\filt_k} \leq W_k - \frac{1-\alpha_k^2}{2}\norm{x_k-x_{k-1}}^2 + s_k^2 \sigma_k^2 .
\end{align}
Multiplying \eqref{eq.combinaison_eq_prec_10'} by $t_{k+1}^2$ and rearranging entails
\begin{align}\label{eq.combinaison_eq_prec_11}
&\EX{t_{k+1}^2 W_{k+1}}{\filt_k} 
\leq t_{k+1}^2 W_k - t_{k+1}^2\frac{1-\alpha_k^2}{2}\norm{x_k-x_{k-1}}^2 + s_k^2 t_{k+1}^2 \sigma_k^2 \nonumber\\
&= t_k^2 W_k + s_k (t_{k+1}^2-t_k^2)(f(x_k) - \min f) + \frac{t_{k+1}^2-t_k^2-t_{k+1}^2(1-\alpha_k^2)}{2}\norm{x_k-x_{k-1}}^2 + s_k^2 t_{k+1}^2 \sigma_k^2 \nonumber \\
&\leq t_k^2 W_k + m s_k t_{k+1}(f(x_k) - \min f) - \frac{t_k}{2}\norm{x_k-x_{k-1}}^2 + 4s_k^2 t_k^2 \sigma_k^2 .
\end{align}
In the equality, we used the expression of $W_k$. In the second inequality we used \eqref{eq:K1+} and that $t_k = 1+ t_{k+1}\alpha_k$ and \eqref{eq:K1} which gives
\[
t_{k+1}^2-t_k^2-t_{k+1}^2(1-\alpha_k^2) = (t_k-1)^2 - t_k^2 = -2t_k + 1 \leq - t_k
\]
as $t_k \geq 1$. We have already proved above (see \eqref{eq:sumfxk}) that $\seq{s_k t_{k+1}(f(x_k) - \min f)} \in \ell^1_+(\Filt)$. Combining this with the second part of \eqref{eq:K2} allows us to invoke again Lemma~\ref{lem:RS} on \eqref{eq.combinaison_eq_prec_11} to deduce that 
\begin{equation}\label{eq:sumxkxkm}
\sum_{k \geq 1} t_k \norm{x_k-x_{k-1}}^2 < +\infty \; \Pas .
\end{equation}
Moreover, Lemma~\ref{lem:RS} also implies that $t_k^2 W_k$ converges $\Pas$. On the other hand, we have
\[
t_{k+1}W_k = s_kt_{k+1}(f(x_k) - \min f) + \frac{t_{k+1}}{2} \norm{x_k - x_{k-1}}^2 \leq s_kt_{k+1}(f(x_k) - \min f) + t_k \norm{x_k - x_{k-1}}^2 ,
\]
and thus \eqref{eq:sumfxk} and \eqref{eq:sumxkxkm} imply that 
\[
\sum_{k \geq 1} t_{k+1}W_k < +\infty \; \Pas .
\]
In turn
\[
\sum_{k \geq 1} t_{k+1}W_k = \sum_{k \geq 1} \frac{t_{k+1}}{t_k^2} t_k^2 W_k < +\infty \quad \Pas
\]
entailing that $\liminf_{k \to +\infty} t_k^2 W_k = 0$ $\Pas$. This together with $\Pas$ convergence of $t_k^2 W_k$ shown just above gives that 
\[
W_k = o\pa{\frac{1}{t_k^2}} .
\]
Returning to the definition of $W_k$ proves the assertions.

\smallskip

\noindent\eqref{thm:Thm-Nest-stoch-claim3}
The crux of the proof consists in applying Opial's Lemma on a set of events of probability one. Observe that \eqref{eq:K2+} implies \eqref{eq:K2}. Thus Lemma~\ref{lem:RS} applied to \eqref{eq.combinaison_eq_prec_11} ensures also that $t_k^2 W_k$ converges $\Pas$. In particular, this implies that $t_k\norm{x_k-x_{k-1}}$ is bounded $\Pas$. From the proof of claim (i), we also know that $\Pas$, $V_k$ converges, hence $\seq{z_k}$ is bounded. In view of the definition of $z_k$, we obtain that $\seq{x_k}$ is bounded $\Pas$. Moreover, since $t_k \geq 1$ and $\underline{s}=\inf_k s_k > 0$, we get from (ii) that $\Pas$
\[
\underline{s}\sum_{k \geq 1} (f(x_{k}) - \min f) \leq \sum_{k \geq 1} s_k t_{k+1}(f(x_{k}) - \min f) < +\infty ,
\]
and thus $\lim_{k \to +\infty} f(x_{k}) = \min f$ $\Pas$.

Let $\hat{\Omega}$ be the set of events on which the last statement holds and $\check{\Omega}$ on which boundedness of $\seq{x_k}$ holds. Both sets are of probability one. For any $\omega \in \hat{\Omega} \cap \check{\Omega}$, let $(x_{k_j}(\omega))_{j \geq 1}$ be any converging subsequence, and $\bar{x}(\omega)$ its weak cluster point. 
\[
f(\bar{x}(\omega)) = \lim_{j \to \infty} f(x_{k_j}(\omega)) = \lim_{k \to \infty} f(x_{k}(\omega)) = \min f ,
\]
which means that $\bar{x}(\omega) \in S$. This implies that $\Pas$ each weak cluster point of $\seq{x_k}$ belongs to $S = \argmin(f)$. In other words, the second condition of Opial's lemma holds $\Pas$.

Let $x^\star \in S$ and define $h_k \eqdef \frac{1}{2}\norm{x_k - x^\star}^2$. We now show that $\lim_{k \to +\infty} h_k$ exists $\Pas$. For this, we use a standard argument that can be found e.g. in \cite{AP,AC2R-JOTA}. By \cite[Proposition~2.3]{AC2R-JOTA}, we have
\begin{align*}
h_{k+1} - h_k - \alpha_k(h_k - h_{k-1}) 
&\leq \frac{\alpha_k(1+\alpha_k)}{2}\norm{x_k-x_{k-1}}^2 - s_k (f_k(x_{k+1}) - f_k(x^\star) \nonumber \\
&\leq \norm{x_k-x_{k-1}}^2 - s_k (f(x_{k+1}) - \min f) - s_k \dotp{e_k}{x_{k+1}-x^\star} \nonumber \\
&\leq \norm{x_k-x_{k-1}}^2 - s_k \dotp{e_k}{x_{k+1}-x^\star} . 
\end{align*}
In the second inequality we used that $\alpha_k \in [0,1]$, and the last one minimality of $x^\star$. Almost sure boundedness of $x_k$ implies that there exists a $[0,+\infty[$-valued random variable $\eta$ such that $\sup_{k \in \N} \norm{x_k-x^\star} \leq \eta < +\infty$ $\Pas$.  
Thus 
\begin{align}\label{eq.combinaison_eq_prec_12'}
h_{k+1} - h_k - \alpha_k(h_k - h_{k-1}) \leq \norm{x_k-x_{k-1}}^2 + \eta s_k \norm{e_k} .
\end{align}
Multiplying \eqref{eq.combinaison_eq_prec_12'} by $t_{k+1}$, taking the positive part and the conditional expectation, we end up having
\begin{align*}
\EX{t_{k+1}(h_{k+1} - h_k)_+}{\filt_k} 
&\leq t_{k+1}\alpha_k(h_k - h_{k-1})_+ + t_{k+1}\norm{x_k-x_{k-1}}^2 + \eta s_k t_{k+1} \EX{\norm{e_k}}{\filt_k} \\
&\leq (t_k - 1)(h_k - h_{k-1})_+ + t_{k+1}\norm{x_k-x_{k-1}}^2 + 2\eta s_k t_k \EX{\norm{e_k}^2}{\filt_k}^{1/2} \\
&= t_k(h_k - h_{k-1})_+ - (h_k - h_{k-1})_+ + t_{k+1}\norm{x_k-x_{k-1}}^2 + 2\eta s_k t_k \sigma_k .
\end{align*}
where we used that $t_k=1+t_{k+1}\alpha_k$, that $t_{k+1} \leq 2t_k$ and Jensen's inequality. As the last two terms in the rhs are summable $\Pas$, we get using Lemma~\ref{lem:RS} that $\seq{(h_k - h_{k-1})_+} \in \ell^1_+(\Filt)$ $\Pas$. In turn, since $h_k$ is non-negative, we get by a classical argument that $\lim_{k \to +\infty} h_k$ exists. 

Note that the set of events of probability on which $\lim_{k \to +\infty} h_k$ exists depends on $x^\star$. To make this uniform on $S$ we use a separability argument. 

Indeed, we have just shown that there exists a set of events $\Omega_{x^\star}$ (that depends on $x^\star$) such that $\mathbb{P}(\Omega_{x^\star})=1$ and for all $\omega \in \Omega_{x^\star}$, $\seq{\norm{x_k(\omega)-x^\star}}$ converges.
We now show that there exists a set of events independent of $x^\star$, whose probability is one and such that the above still holds on this set. Since $\cH$ is separable, there exists a countable set $U \subseteq S$, such that $\mathrm{cl}(U)=S$. Let $\tilde{\Omega}=\bigcap_{u \in U}\Omega_u$. Since $U$ is countable, a union bound shows
\[
\mathbb{P}(\tilde{\Omega})=1-\mathbb{P}\pa{\bigcup_{u \in U}\Omega_u^c}\geq 1-\sum_{u \in U}\mathbb{P}(\Omega_u^c) = 1.
\]
For arbitrary $x^{\star}\in S$, there exists a sequence $(u_j)_{j\in\N} \subset U$ such that $u_j$ converges strongly to $x^{\star}$. Thus for every $j\in\N$ there exists $\tau_j: \Omega_{u_j}\rightarrow\R_+$ such that
\begin{equation}\label{eq.combinaison_eq_prec_12}
\lim_{k \to +\infty}\norm{x_k(\omega)-u_j}=\tau_j(\omega), \quad \forall\omega\in\Omega_{u_j}.
\end{equation}
Now, let $\omega \in \tilde{\Omega}$. Since $\tilde{\Omega} \subset \Omega_{u_j}$ for any $j \geq 1$, and  using the triangle inequality and \eqref{eq.combinaison_eq_prec_12}, we obtain that 
\[
\tau_j(\omega) - \norm{u_j-x^{\star}} \leq \liminf_{k \rightarrow +\infty}\norm{x_k(\omega)-x^{\star}} \leq \limsup_{k \to +\infty}\norm{x_k(\omega)-x^{\star}} \leq \tau_j(\omega) + \norm{u_j-x^{\star}} .
\]
Passing to $j \to +\infty$, we deduce
\[
\limsup_{j \to +\infty}\tau_j(\omega) \leq \liminf_{k \to +\infty}\norm{x_k(\omega)-x^{\star}} \leq \limsup_{k \to +\infty}\norm{x_k(\omega)-x^{\star}} \leq \liminf_{j \to +\infty}\tau_j(\omega) ,
\]
whence we deduce that $\lim_{j \to +\infty}\tau_j(\omega)$ exists for all $\omega \in \tilde{\Omega}$. In turn, $\Pas$, $\lim_{k \to +\infty}\norm{x_k-x^{\star}}$ exists and is equal to $\lim_{j \to +\infty}\tau_j$ for any $x^\star \in S$. 

We are now in position to apply Opial's Lemma at any $\omega \in \hat{\Omega} \cap \check{\Omega} \cap \tilde{\Omega}$,  since $\mathbb{P}(\hat{\Omega} \cap \check{\Omega} \cap \tilde{\Omega})=1$, to conclude. \qed

\end{proof}

Let us now return to the Ravine algorithm.
A simple adaptation of the proof of Theorem~\ref{Nest_Ravine} applied to \ref{eq:NAGalphakstoch} (just replace $f$ by $f +\left\langle e_k, \cdot \right\rangle$, and follow similar algebraic manipulations) gives that the associated sequence $\seq{y_k}$  defined by
\[
y_k = x_{k} + \alpha_k ( x_{k} - x_{k-1}),
\]
follows the stochastic Ravine accelerated gradient algorithm with $\gamma_k = \alpha_{k+1}$, i.e. for all $k \geq 1$
\begin{equation}\tag{${\rm(SRAG)}_{\alpha_{k+1}}$}\label{eq:RAGgammakstoch}
\begin{cases}
w_k = y_k - s_k (\nabla f(y_k) + e_k) \vspace{2mm} \\
y_{k+1} = w_k + \alpha_{k+1} \left(w_k - w_{k-1}\right) . \rule{0pt}{5pt} \hspace{3cm}
\end{cases}
\end{equation}
\ref{eq:RAGgammakstoch} is initialized with $y_0$ and $w_{-1}=y_0$, where $y_0$ is a $\cH$-valued, squared integrable random variable.
According to this relationship between the Nesterov and the Ravine method highlighted in in Theorem~\ref{Nest_Ravine}, the results of Theorem~\ref{thm:SNAG} can now be transposed to \ref{eq:RAGgammakstoch}. For this, we denote the canonical filtration associated to \ref{eq:RAGgammakstoch} as $\Filt = \seq{\filt_k}$ with, $\forall k \geq \N$, $\filt_k = \sigma(y_0,(w_{i})_{i \leq k-1})$.

\begin{theorem}\label{thm:SRAG} 
Assume the conditions presented in \eqref{assum:H}. Let $\seq{y_k}$ be the sequence generated by \ref{eq:RAGgammakstoch} where $s_k \in ]0,1/L]$ is a non-increasing sequence, $\seq{\alpha_k} \subset [0,1]$ satisfies \eqref{eq:K0} and \eqref{eq:K1+} with $\sum_{k \in \N} \frac{t_{k+1}}{t_k^2} = +\infty$, and $\seq{e_k}$ is a sequence of stochastic errors satisfying \eqref{eq:K2+}. 
%
%
Then, the sequence $\seq{y_k}$ satisfies
\begin{equation*}
\sum_{k \in \N} s_k t_{k+1}(f(y_k)-\min f) < +\infty \quad \text{and} \quad f(y_k)-  \min_{\cH} f = o \left(\frac{1}{s_k t_{k}^2}\right) \quad \mbox{ as } k\to +\infty \quad \Pas .
\end{equation*}
%
Moreover, if $\inf_k s_k > 0$, then the sequence $\seq{y_k}$ converges weakly $\Pas$ to an $\argmin(f)$-valued random variable.
\end{theorem}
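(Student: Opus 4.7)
The plan is to reduce the stochastic Ravine recursion to the stochastic Nesterov one and apply Theorem~\ref{thm:SNAG}. The sequence $\seq{x_k}$ defined by $x_{k+1} \eqdef w_k = y_k - s_k(\nabla f(y_k)+e_k)$ (with $x_0 = y_0$) satisfies \ref{eq:NAGalphakstoch} with the same coefficients $\seq{\alpha_k}$, and obeys $y_k = x_k + \alpha_k(x_k - x_{k-1})$ for $k \geq 1$, as already noted in the paragraph preceding the statement of Theorem~\ref{thm:SRAG}. All hypotheses of Theorem~\ref{thm:SNAG}\eqref{thm:Thm-Nest-stoch-claim2}--\eqref{thm:Thm-Nest-stoch-claim3} are met: $\seq{\alpha_k}\subset[0,1]$, \eqref{eq:K0}, \eqref{eq:K1+} (which implies \eqref{eq:K1}), $\sum_{k} t_{k+1}/t_k^2 = +\infty$, and \eqref{eq:K2+} (which implies \eqref{eq:K2}). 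The Nesterov theorem therefore delivers almost surely
\[
\sum_k s_k t_{k+1}(f(x_k)-\min f) < +\infty, \quad \sum_k t_k \norm{x_k - x_{k-1}}^2 < +\infty,
\]
together with $f(x_k) - \min f = o\pa{1/(s_k t_k^2)}$, $\norm{x_k-x_{k-1}} = o\pa{1/t_k}$, and, when $\inf_k s_k > 0$, weak almost sure convergence of $\seq{x_k}$ to an $\argmin(f)$-valued random variable.

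The technical bridge from $x_k$ to $y_k$ is a routine smoothness estimate. Since $y_k - x_k = \alpha_k(x_k - x_{k-1})$ with $\alpha_k \in [0,1]$, the descent lemma, Young's inequality, and the classical bound $\norm{\nabla f(x_k)}^2 \leq 2L(f(x_k)-\min f)$ (valid for convex $L$-smooth $f$ with a minimizer) yield
\[
f(y_k) - \min f \;\leq\; 2(f(x_k) - \min f) + L\norm{x_k - x_{k-1}}^2.
\]
Multiplying by $s_k t_{k+1}$ and using $s_k \leq 1/L$ together with $t_{k+1} \leq 2 t_k$ (a consequence of \eqref{eq:K1}, see \cite[Remark~3.3]{AC2R-JOTA}) bounds the right-hand side by $2 s_k t_{k+1}(f(x_k) - \min f) + 2 t_k \norm{x_k - x_{k-1}}^2$, which is summable almost surely by the Nesterov conclusions. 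Multiplying instead by $s_k t_k^2$ and again invoking $s_k \leq 1/L$ gives
\[
s_k t_k^2(f(y_k) - \min f) \;\leq\; 2 s_k t_k^2(f(x_k) - \min f) + t_k^2 \norm{x_k - x_{k-1}}^2 \;=\; o(1),
\]
proving the $o\pa{1/(s_k t_k^2)}$ rate for $f(y_k) - \min f$.

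For the weak almost sure convergence assertion, the discrepancy $\norm{y_k - x_k} = \alpha_k \norm{x_k - x_{k-1}} \leq \norm{x_k - x_{k-1}} = o\pa{1/t_k}$ tends to zero strongly almost surely. Writing $y_k = x_k + (y_k - x_k)$ and applying Theorem~\ref{thm:SNAG}\eqref{thm:Thm-Nest-stoch-claim3} shows that $\seq{y_k}$ inherits the weak almost sure limit of $\seq{x_k}$, which lies in $\argmin(f)$. There is no genuine obstacle here: the reduction to \ref{eq:NAGalphakstoch} offloads the Lyapunov work onto Theorem~\ref{thm:SNAG}, and the passage $x_k \rightsquigarrow y_k$ crucially leverages the sharp $o\pa{1/t_k}$ rate on the velocity $\norm{x_k - x_{k-1}}$ already established there.
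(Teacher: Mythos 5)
Your proposal is correct, and its skeleton is the same as the paper's: define $x_{k+1} \eqdef w_k = y_k - s_k(\nabla f(y_k)+e_k)$, use Theorem~\ref{Nest_Ravine} to see that $\seq{x_k}$ runs \ref{eq:NAGalphakstoch} with the same coefficients, quote Theorem~\ref{thm:SNAG}\eqref{thm:Thm-Nest-stoch-claim2}--\eqref{thm:Thm-Nest-stoch-claim3}, and transfer the weak limit through $\norm{y_k-x_k} \leq \norm{x_k-x_{k-1}} = o(1/t_k)$, exactly as in the paper. Where you genuinely differ is the bridge from $f(x_k)$ to $f(y_k)$. The paper bounds $f(y_k)-f(x_k)$ by convexity at $y_k$ and then substitutes $\nabla f(y_k) = -\tfrac{1}{s_k}(x_{k+1}-y_k) - e_k$ from the update, so the noise enters the estimate: it must invoke Lemma~\ref{lem:RSsum} under \eqref{eq:K2+} to get $\sum_k s_k t_k \norm{e_k} < +\infty$ $\Pas$ (hence $s_k\norm{e_k}=o(1/t_k)$), and it uses the $\Pas$ boundedness of $\seq{x_k}$ (the random variable $\eta$) to prove the summability of $s_k t_{k+1}(f(y_k)-\min f)$. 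Your bridge $f(y_k)-\min f \leq 2(f(x_k)-\min f) + L\norm{x_k-x_{k-1}}^2$, obtained from the descent lemma at $x_k$, Young's inequality and $\norm{\nabla f(x_k)}^2 \leq 2L(f(x_k)-\min f)$, is purely deterministic given $y_k - x_k = \alpha_k(x_k-x_{k-1})$ with $\alpha_k\in[0,1]$: the errors never appear, so you need neither Lemma~\ref{lem:RSsum} nor boundedness of the iterates, only $s_kL\leq 1$ and $t_{k+1}\leq 2t_k$ (which follows from \eqref{eq:K1}, itself implied by \eqref{eq:K1+}), both available. The payoff is a shorter transfer step with fewer ingredients, delivering the same three conclusions (summability, the $o(1/(s_kt_k^2))$ rate, and weak $\Pas$ convergence); the paper's noise-aware bridge yields the side estimates on $\sum_k s_k t_k\norm{e_k}$, but these are not needed for the statement, and everything that the subsequent Theorem~\ref{thm:SRAGgrad} borrows from this proof (namely \eqref{eq:thm:SNAGrecall}, \eqref{NAG22-c} and the summability of $s_kt_{k+1}(f(y_k)-\min f)$) is also produced by your argument.
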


\begin{proof}
According to Theorem~\ref{Nest_Ravine}, the sequence $\seq{x_k}$ defined by 
\begin{equation}\label{def:xk-c}
x_{k+1} =  y_k - s_k (\nabla f (y_k)+e_k)
\end{equation} 
is equivalent to Algorithm \ref{eq:NAGalphakstoch}. It then follows from Theorem~\ref{thm:SNAG}\eqref{thm:Thm-Nest-stoch-claim2} that
\begin{equation}\label{eq:thm:SNAGrecall}
f(x_k)-\min f =o\left(\frac{1}{s_k t_k^2}\right) \quad \mbox{and}\quad \|x_k-\xkm\|=o\left(\frac{1}{t_k}\right) \quad
\Pas . 
\end{equation}
In addition, in view of condition \eqref{eq:K2+}, we can apply Lemma~\ref{lem:RSsum} with $\varepsilon_k=\seq{s_k t_k \sigma_k}$ to infer that
\begin{equation}\label{eq:sumek}
\sum_{k=1}^{+\infty} s_k t_k \norm{e_k} < +\infty \quad \Pas ,
\end{equation}
and thus
\begin{equation}\label{eq:smalloek}
s_k\norm{e_k} = o\pa{\frac{1}{t_k}} \quad \Pas .
\end{equation}
Rearrange the terms in  \eqref{def:xk-c} to obtain the expression $\nabla f (y_k) = -\frac{1}{s_k} (x_{k+1} -  y_k ) - e_k$. Using, successively, the convexity of $f$, the Cauchy-Schwartz inequality, and the triangle inequality, we obtain
\begin{eqnarray}
f(y_k) -\min_{\mathcal H} f &\leq& f(x_k) -\min_{\mathcal H} f + \frac{1}{s_k} \left\langle x_{k+1} -  y_k +s_k e_k, x_k - y_k   \right\rangle
\nonumber \\
&\leq& f(x_k)-\min_{\mathcal H} f  + \frac{1}{s_k} \left( \|x_{k+1} -  y_k\| + s_k \|e_k\|\right)  \|x_{k} -  y_{k}\| \nonumber \\
&\leq& f(x_k)-\min_{\mathcal H} f  + \frac{1}{s_k} \left( \|x_{k+1} -  x_k\| + \|x_{k} -  y_{k}\|  + s_k \|e_k\|\right)  \|x_{k} -  y_{k}\|. \label{NAG2-c}
\end{eqnarray}
Using again the link between \ref{eq:RAGgammakstoch} and \ref{eq:NAGalphakstoch}, we have 
\begin{eqnarray*}
y_{k} &= & x_{k} + \alpha_k \left(x_{k} - x_{k-1}\right).
\end{eqnarray*}
Therefore, since $\alpha_k \in [0,1]$,
\begin{eqnarray}\label{NAG22-c}
\| y_{k}- x_{k} \| \leq \|x_{k} - x_{k-1}\|.
\end{eqnarray}
Combining \eqref{eq:thm:SNAGrecall}, \eqref{eq:smalloek}, \eqref{NAG2-c} and \eqref{NAG22-c} we obtain
\begin{align*}
f(y_k) -\min_{\mathcal H} f 
&\leq f(x_k)-\min_{\mathcal H} f   
+ \frac{1}{s_k} \left( \|x_{k+1} -  x_k\| + \|x_{k} - x_{k-1}\| + s_k \|e_k\|\right)  \|x_{k} - x_{k-1}\| \\
&= o \left(\frac{1}{s_k t_k^2} \right) \quad \Pas 
\end{align*}
where we used that $t_{k+1} \leq 2t_{k}$ in the last equality.
In addition, using Young's inequality, that $\seq{x_k}$ is bounded $\Pas$, \eqref{eq:sumek} and the summability claims of Theorem~\ref{thm:SNAG}\eqref{thm:Thm-Nest-stoch-claim2}, we get that $\Pas$,
\begin{multline*}
\sum_{k \in \N} s_k t_{k+1}(f(y_k)-\min f)
\leq \sum_{k \in \N} s_k t_{k+1}(f(x_k)-\min f)  
+ \sum_{k \in \N} \frac{t_{k+1}}{2}\|x_{k+1} -  x_k\|^2 \\
+ 3\sum_{k \in \N} t_{k}\|x_{k} - x_{k-1}\|^2 + 4\eta\sum_{k \in \N} t_{k}s_k \|e_k\| < +\infty ,  
\end{multline*}
where $\eta$ is the $[0,+\infty[$-valued random variable such that $\sup_{k \in \N} \norm{x_k} \leq \eta < +\infty$ $\Pas$.

Now, from \eqref{eq:thm:SNAGrecall} and \eqref{NAG22-c}, we also have $\norm{y_k - x_k} = o \left(\frac{1}{t_k}\right)$ $\Pas$. Consequently, $y_k-x_k $ converges strongly $\Pas$ to zero. Since the sequence $\seq{x_k}$ converges weakly, it follows that the sequence $\seq{y_k}$ converges weakly $\Pas$ to the same limit as $\seq{x_k}$, and we know from Theorem~\ref{thm:SNAG}\eqref{thm:Thm-Nest-stoch-claim3} that the latter indeed converges weakly $\Pas$ to an $\argmin(f)$-valued random variable. \qed
\end{proof}

\subsection{Fast convergence of the gradients towards zero}
In this section, the previous results on the stochastic Ravine method \ref{eq:RAGgammakstoch} are completed in also showing the fast convergence towards zero of the gradients. 
This will necessitate a specific and intricate Lyapunov analysis\footnote{Observe that embarking from \eqref{eq.combinaison_eq_prec_1a}-\eqref{eq.combinaison_eq_prec_1b} and using the refined estimate in \eqref{eq:extdesclem} is not sufficient to get the result.}. 

Recall $f_k(x) \eqdef f(x) + \dotp{e_k}{x}$ from the proof of Theorem~\ref{thm:SNAG}. The formula in Lemma~\ref{lem:ravseq} hereafter will play a key role in our Lyapunov analysis, and will serve as the constitutive formulation of the algorithm. It corresponds to the Hamiltonian formulation of the algorithm involving the discrete velocities which are defined by, for each $k\in \N$
\begin{equation}\label{def:vk}
v_k \eqdef \frac{1}{h}(y_{k} - y_{k-1})
\end{equation}
where we recall that $h=\sqrt{s}$.

\begin{lemma}\label{lem:ravseq}
Let $\seq{y_k}$ be generated by \ref{eq:RAGgammakstoch}. Then, for all $k\in \N$
\begin{equation}\label{dyn-Rav-2-12-2021_p}
t_{k+1}(v_{k} + h \nabla f_{k-1} (y_{k-1}))- (t_{k} -1) (v_{k-1} + h \nabla f_{k-2} (y_{k-2}))
= - h(t_{k} -1) \nabla f_{k-1} (y_{k-1}).
\end{equation}
\end{lemma}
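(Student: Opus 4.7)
The plan is to derive a two-step velocity recursion from the Ravine update and then convert the $\alpha_k$ coefficients to $t_k$ coefficients via the inverse transform \eqref{inverse_tk}, which gives $t_{k+1}\alpha_k = t_k-1$.

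First I would write out the Ravine recursion at index $k-1 \to k$ in the form $y_k = w_{k-1} + \alpha_k(w_{k-1}-w_{k-2})$, substitute $w_{k-1} = y_{k-1} - s\,\nabla f_{k-1}(y_{k-1})$ and $w_{k-2} = y_{k-2} - s\,\nabla f_{k-2}(y_{k-2})$ (recall $f_k(x) = f(x) + \langle e_k, x\rangle$, so $\nabla f_k(y_k) = \nabla f(y_k) + e_k$), and compute $y_k - y_{k-1}$. Dividing by $h$ and using $s = h^2$ yields the single-step velocity form
\[
v_k = \alpha_k v_{k-1} - h\,\nabla f_{k-1}(y_{k-1}) - h\alpha_k\bigl(\nabla f_{k-1}(y_{k-1}) - \nabla f_{k-2}(y_{k-2})\bigr).
\]
This is pure algebra: it is exactly the statement that the Ravine scheme takes a gradient step and then performs a momentum extrapolation on the $w_k$-variables, rewritten in terms of the velocities $v_k$.

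The second step is the key idea: multiply the previous display by $t_{k+1}$ and invoke \eqref{inverse_tk} in the form $t_{k+1}\alpha_k = t_k - 1$. The right-hand side becomes $(t_k-1)v_{k-1}$, the coefficient $t_{k+1}\alpha_k h$ multiplying both $\nabla f_{k-1}(y_{k-1})$ and $\nabla f_{k-2}(y_{k-2})$ becomes $(t_k-1)h$, and $t_{k+1}h\,\nabla f_{k-1}(y_{k-1})$ remains isolated. Regrouping $v_k$ with $h\nabla f_{k-1}(y_{k-1})$ on the left at weight $t_{k+1}$ and $v_{k-1}$ with $h\nabla f_{k-2}(y_{k-2})$ at weight $(t_k-1)$ leaves a single residual term $-(t_k-1)h\,\nabla f_{k-1}(y_{k-1})$, which is precisely \eqref{dyn-Rav-2-12-2021_p}.

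There is no real obstacle here beyond careful index bookkeeping; the content of the lemma is that the Ravine scheme can be rewritten as a discrete first-order momentum system in the Hamiltonian pair $(y_k, v_k + h\nabla f_{k-1}(y_{k-1}))$, and the identity \eqref{inverse_tk} is exactly what is needed to bring the coefficients into the form adapted to the forthcoming Lyapunov analysis of the gradients. The residual $-h(t_k-1)\nabla f_{k-1}(y_{k-1})$ is the discrete signature of the Hessian-driven damping uncovered in Theorem~\ref{highresolution}.
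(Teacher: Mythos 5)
Your proposal is correct and follows essentially the same route as the paper: both derive the one-step velocity recursion $v_k = \alpha_k v_{k-1} - h\nabla f_{k-1}(y_{k-1}) - h\alpha_k\bigl(\nabla f_{k-1}(y_{k-1}) - \nabla f_{k-2}(y_{k-2})\bigr)$ from the Ravine update (equivalently, the paper's intermediate identity $(v_k + h\nabla f_{k-1}(y_{k-1})) - \alpha_k(v_{k-1} + h\nabla f_{k-2}(y_{k-2})) = -h\alpha_k\nabla f_{k-1}(y_{k-1})$) and then pass to \eqref{dyn-Rav-2-12-2021_p} via $t_{k+1}\alpha_k = t_k - 1$ from \eqref{inverse_tk}. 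Your version merely streamlines the paper's intermediate divisions by $h\alpha_k$ and $\tfrac{\alpha_k}{1-\alpha_k}$; no gap.
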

\begin{proof}
According to the algorithm recursion, we have
\begin{eqnarray*}
y_k &=& y_{k-1} - h^2 \nabla f_{k-1} (y_{k-1}) + \alpha_k \pa{ y_{k-1}- h^2\nabla f_{k-1} (y_{k-1})  - \bpa{y_{k-2}- h^2\nabla f_{k-2} (y_{k-2})}}\\
&= & y_{k-1}  + \alpha_k (y_{k-1}- y_{k-2}) - h^2 \pa{\nabla f_{k-1} (y_{k-1}) + \alpha_k \bpa{ \nabla f_{k-1} (y_{k-1}) -\nabla f_{k-2} (y_{k-2} )}} .
\end{eqnarray*}
Equivalently, 
\begin{align*}
0 &= (y_k - y_{k-1}) -\alpha_k (y_{k-1}- y_{k-2}) + h^2\nabla f_{k-1} (y_{k-1})
+ h^2 \alpha_k ( \nabla f_{k-1} (y_{k-1}) -\nabla f_{k-2} (y_{k-2} )) \\
&= \alpha_k( y_k - y_{k-1}) - \alpha_k(y_{k-1}- y_{k-2}) + (1-\alpha_k) (y_k - y_{k-1}) 
+ h^2 \nabla f_{k-1} (y_{k-1}) \\
&\quad + h^2 \alpha_k ( \nabla f_{k-1} (y_{k-1}) - \nabla f_{k-2} (y_{k-2} )) .
\end{align*}
Let us make $v_k$ appear by multiplying this equality by $\frac{1}{h\alpha_k}$. We then get
\begin{align*}
0 &= v_{k} - v_{k-1}  
 + \frac{1-\alpha_k}{\alpha_k} v_{k} 
+\frac{h}{\alpha_k}\nabla f_{k-1} (y_{k-1})
+ h ( \nabla f_{k-1} (y_{k-1}) -\nabla f_{k-2} (y_{k-2} )) \\
&= (v_{k} + h \nabla f_{k-1} (y_{k-1}))- (v_{k-1} + h \nabla f_{k-2} (y_{k-2}))
+ \frac{1-\alpha_k}{\alpha_k} v_{k} 
+\frac{h}{\alpha_k}\nabla f_{k-1} (y_{k-1}) .
\end{align*}
After multiplication by $\frac{\alpha_k}{1-\alpha_k}$, we arrive at
\begin{multline*}
0 = \frac{\alpha_k}{1-\alpha_k}(v_{k} + h \nabla f_{k-1} (y_{k-1}))- \frac{\alpha_k}{1-\alpha_k}(v_{k-1} + h \nabla f_{k-2} (y_{k-2})) + v_{k} +\frac{h}{1-\alpha_k}\nabla f_{k-1} (y_{k-1}) \\
= \left( 1+\frac{\alpha_k}{1-\alpha_k}\right) (v_{k} + h \nabla f_{k-1} (y_{k-1}))- \frac{\alpha_k}{1-\alpha_k}(v_{k-1} + h \nabla f_{k-2} (y_{k-2}))
 -h \nabla f_{k-1} (y_{k-1}) \\
+\frac{h}{1-\alpha_k}\nabla f_{k-1} (y_{k-1}) .
\end{multline*}
We thus obtain 
\begin{equation*}
\frac{1}{1-\alpha_k}(v_{k} + h \nabla f_{k-1} (y_{k-1}))- \frac{\alpha_k}{1-\alpha_k}(v_{k-1} + h \nabla f_{k-2} (y_{k-2}))
= - \frac{h\alpha_k}{1-\alpha_k}\nabla f_{k-1} (y_{k-1}).
\end{equation*}
Equivalently
\begin{equation}\label{dyn-Rav-1-12-2021_p}
(v_{k} + h \nabla f_{k-1} (y_{k-1}))- \alpha_k(v_{k-1} + h \nabla f_{k-2} (y_{k-2}))
= - h\alpha_k \nabla f_{k-1} (y_{k-1}).
\end{equation}
In view of \eqref{inverse_tk}, the last equality is also equivalent to \eqref{dyn-Rav-2-12-2021_p}. This completes the proof of the Lemma. \qed
\end{proof}

Recall the canonical filtration associated to \ref{eq:RAGgammakstoch} as $\Filt = \seq{\filt_k}$ with, $\forall k \geq \N$, $\filt_k = \sigma(y_0,(w_{i})_{i \leq k-1})$.
\begin{theorem}\label{thm:SRAGgrad}
Let us assume the conditions defined in \eqref{assum:H}. Let $\seq{y_k}$ be the sequence generated by \ref{eq:RAGgammakstoch} where $s_k \equiv s \in ]0,1/L]$, $\seq{\alpha_k} \subset [0,1]$ satisfy \eqref{eq:K0} and \eqref{eq:K1+}. Assume that $\seq{e_k}$ is a sequence of stochastic errors subject to conditions \eqref{eq:K2+}. 
%
%
Then the sequence of gradients $\seq{\nabla f (y_k)}$ converges to zero with
\[
\sum_{k \in \N}  t_{k+1}^2\| \nabla f (y_k) \|^2 < +\infty \quad \Pas .
\]
%
%
\end{theorem}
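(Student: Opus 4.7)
The plan is to carry out a stochastic Lyapunov analysis built around the Hamiltonian variable $p_k = v_k + h\nabla f_{k-1}(y_{k-1})$ introduced in Lemma~\ref{lem:ravseq}. The starting point is to recast that lemma, after multiplying by $h$, as a clean telescoping identity for the anchor sequence $\tilde z_k \eqdef y_{k-1} + h(t_k - 1) p_{k-1}$, which a direct computation (using $t_k - 1 = t_{k+1}\alpha_k$ and the Ravine recursion on $y_k - y_{k-1}$) reduces to
\begin{equation*}
\tilde z_{k+1} - \tilde z_k = -s t_k\,\nabla f_{k-1}(y_{k-1}).
\end{equation*}
This is the Ravine analogue of the Nesterov identity \eqref{eq:zkrec}. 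Already at this stage, expanding $\|\tilde z_{k+1} - x^\star\|^2 - \|\tilde z_k - x^\star\|^2$ produces a positive $s^2 t_k^2 \|\nabla f_{k-1}(y_{k-1})\|^2$ contribution, which tells us that the weight $t_{k+1}^2$ in the desired summability must come precisely from the squared norm of the anchor increment.

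The next step is to introduce a Lyapunov candidate of the shape
\[
\tilde V_k \eqdef s t_k^2 (f(y_{k-1}) - \min f) + \tfrac{1}{2}\|\tilde z_k - x^\star\|^2 + \tfrac{s^2(t_k-1)^2}{2}\|\nabla f_{k-2}(y_{k-2})\|^2,
\]
the last term being a Hessian-damping correction designed to absorb the cross product $\langle \nabla f(y_{k-1}),\nabla f_{k-2}(y_{k-2})\rangle$ which appears when splitting $\tilde z_k - x^\star = (y_{k-1}-x^\star) + h(t_k-1) p_{k-1}$ inside the dominant scalar product $-2st_k \langle \nabla f_{k-1}(y_{k-1}), \tilde z_k - x^\star\rangle$. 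Using the convexity of $f$ at $y_{k-1}$ anchored at $x^\star$ (to produce a value gap) and at $y_{k-2}$ (to produce a descent $f(y_{k-1}) - f(y_{k-2})$ matching the factor $t_k-1$), invoking \eqref{eq:K1+} to neutralize the $t_{k+1}^2 - t_k^2$-coefficient on $(f(y_{k-1}) - \min f)$, and completing the square against the Hessian correction, I expect to reach an inequality of the form
\begin{equation*}
\EX{\tilde V_{k+1}}{\filt_{k-1}} \le \tilde V_k - c\, s t_{k+1}^2 \|\nabla f(y_{k-1})\|^2 + R_k,
\end{equation*}
for some constant $c > 0$, where $R_k$ is a residual accounting for the stochastic errors.

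The residual $R_k$ is handled in the same spirit as in Theorems~\ref{thm:SNAG}--\ref{thm:SRAG}: conditional unbiasedness in \eqref{eq:K2+} annihilates the linear-in-$e_{k-1}$ terms, the quadratic noise pieces are of order $s^2 t_k^2 \sigma_k^2$ and are therefore summable (\eqref{eq:K2+} is strictly stronger than the $\ell^2_+$ summability of $\seq{s t_k \sigma_k}$ that is actually required here), and the remaining bilinear contributions of the form $\|y_{k-1} - x^\star\|\,\|e_{k-1}\|$ are dominated using the $\Pas$ boundedness of $\seq{y_k}$ established in Theorem~\ref{thm:SRAG} together with the $\Pas$ summability $\sum_k s t_k \|e_k\| < +\infty$ furnished by Lemma~\ref{lem:RSsum}. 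The Robbins--Siegmund Lemma~\ref{lem:RS}, applied to the resulting recurrence, then simultaneously yields almost sure convergence of $\tilde V_k$ and the summability $\sum_k t_{k+1}^2 \|\nabla f(y_k)\|^2 < +\infty$ $\Pas$ after a harmless index shift, from which $\nabla f(y_k) \to 0$ follows since $t_k\ge 1$. The main difficulty is the delicate discrete balancing between the positive $s^2 t_k^2\|\nabla f\|^2$-contribution from $\|\tilde z_{k+1} - \tilde z_k\|^2$, the value descent, and the Hessian correction in $\tilde V_k$: this is the discrete counterpart of the continuous energy analysis of the high-resolution ODE of Theorem~\ref{highresolution}, and it is precisely what upgrades the gradient weight from $t_{k+1}$ to $t_{k+1}^2$.
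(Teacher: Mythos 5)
Your overall architecture matches the paper's: the constitutive equation of Lemma~\ref{lem:ravseq}, an anchor sequence whose increment is $-s t_k \nabla f_{k-1}(y_{k-1})$ (your $\tilde z_k$ is exactly the paper's $z_{k-1}$), reuse of Theorem~\ref{thm:SRAG} to control the velocities and the noise, and Robbins--Siegmund at the end. The genuine gap is the central step, which you only assert (``I expect to reach''): the mechanism you propose for producing the decrease $-c\,s\,t_{k+1}^2\|\nabla f(y_{k-1})\|^2$ — plain convexity at $x^\star$ and $y_{k-2}$, plus the correction term $\frac{s^2(t_k-1)^2}{2}\|\nabla f_{k-2}(y_{k-2})\|^2$ in the Lyapunov function, plus completing the square — cannot work. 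A coefficient count shows it: writing $X=\nabla f(y_{k-2})$, $Y=\nabla f(y_{k-1})$, the gradient-quadratic contributions of your scheme are
\begin{equation*}
\tfrac{s^2 t_k^2}{2}\norm{Y}^2 \;+\; \tfrac{s^2 (t_{k+1}-1)^2}{2}\norm{Y}^2 \;-\; s^2 t_k(t_k-1)\dotp{Y}{X} \;-\; \tfrac{s^2(t_k-1)^2}{2}\norm{X}^2 ,
\end{equation*}
(anchor increment, incoming correction, cross term, outgoing correction), and the supremum over $X$ (attained at $X=-\tfrac{t_k}{t_k-1}Y$) equals $\tfrac{s^2}{2}\bigl(2t_k^2+(t_{k+1}-1)^2\bigr)\norm{Y}^2$, i.e.\ a \emph{positive} term of exactly the order you need to be negative. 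Plain convexity gives you no compensating $-\norm{Y}^2$ term at all. This is precisely why the paper does not proceed this way: it uses the strengthened gradient inequalities $f(y_{k-1}) \geq f(y_k)+\dotp{\nabla f(y_k)}{y_{k-1}-y_k}+\frac{1}{2L}\norm{\nabla f(y_k)-\nabla f(y_{k-1})}^2$ (and the analogue at $x^\star$), whose remainder enters the quadratic form $R$ in \eqref{eq:basic_Lyap_3112} with the much larger weight $\frac{s}{2L}(t_{k+1}-1)t_{k+1}\geq \frac{s^2}{2}(t_{k+1}-1)t_{k+1}$; it is only this term that makes $R\geq 0$ with $\min_X R(X,Y)=\frac{s^2 t_{k+1}}{2}\bigl((t_{k+1}-1)(2-sL)-1\bigr)\norm{Y}^2$ as in \eqref{R_positive}, which is where the $t_{k+1}^2$ weight comes from (the paper's footnote explicitly warns that the descent-lemma-type estimates alone are insufficient).

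Two secondary issues. First, your pairing is shifted: coupling $f(y_{k-1})$ with $\tilde z_k=z_{k-1}$ means the value increment in $\tilde V_{k+1}-\tilde V_k$ brings in $f(y_k)$, while the anchor increment only produces inner products against $\nabla f(y_{k-1})$ at the points $y_{k-1},y_{k-2},x^\star$; to control the new value you are forced to re-inject the constitutive equation inside the inner product (expressing $(t_k-1)p_{k-1}$ through $p_k$ and $\nabla f_{k-1}(y_{k-1})$), at which point you are redoing the paper's grouping, not avoiding it. Second, the noise analysis contains a term that conditional unbiasedness does not kill: the velocity--noise product (the analogue of $\dotp{v_{k+1}}{e_k}$), since the velocity is not measurable with respect to the conditioning $\sigma$-algebra; the paper handles it through the a.s.\ bound $\norm{v_{k+1}}=o(1/t_{k+1})$ inherited from Theorem~\ref{thm:SNAG}/\ref{thm:SRAG}, an essential-supremum bound and conditional Cauchy--Schwarz, yielding a summable $t_k\sigma_k$ residual under \eqref{eq:K2+}. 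Your residual discussion anticipates the right tools (boundedness, Lemma~\ref{lem:RSsum}) but not this specific term, and it must be addressed for the Robbins--Siegmund step to apply.
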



\begin{proof}
Our Lyapunov analysis is based on the sequence $\seq{E_k}$ defined as
\begin{eqnarray*}
E_k &\eqdef&   h^2 (t_{k+1}-1)t_{k+1}( f(y_{k-1})-  \min f ) +\frac{1}{2}\distS{z_k}^2 , \\
z_k &\eqdef&  y_{k} + h(t_{k+1}-1)\Big( v_{k} + h \nabla f_{k-1}( y_{k-1})  \Big).
\end{eqnarray*}
Let $x^\star$ be the closest point to $z_k$ in $S$. By definition of $ E_k$, we have
\begin{multline}
E_{k+1} - E_k 
\leq h^2 (t_{k+1}-1)t_{k+1}( f (y_{k})-  f (y_{k-1}) ) \\
+h^2 \Big((t_{k+2}-1)t_{k+2} - (t_{k+1}-1)t_{k+1}\Big) ( f (y_{k})-  \min f )
+\frac{1}{2}\|z_{k+1}-x^\star\|^2 -\frac{1}{2}\|z_{k}-x^\star\|^2   \label{Lyap-10} . 
\end{multline}
Let us compute this last expression with the help of the elementary inequality
\begin{equation}\label{elem-ineq}
\frac{1}{2}\|z_{k+1}-x^\star\|^2 -\frac{1}{2}\|z_k-x^\star\|^2  = \left\langle  z_{k+1} - z_{k} , z_{k+1} - x^\star \right\rangle - \frac{1}{2}\|z_{k+1} - z_{k}\|^2  .
\end{equation}
Recall  the constitutive equation given by \eqref{dyn-Rav-2-12-2021_p} that we write as follows
\begin{equation}\label{dyn-Rav-2-12-2021-b}
t_{k+2}(v_{k+1} + h \nabla f_{k} (y_{k}))- (t_{k+1} -1) (v_{k} + h \nabla f_{k-1} (y_{k-1}))
= - h(t_{k+1} -1) \nabla f_{k} (y_{k}).
\end{equation}
Using successively the definition of $z_k$ and  \eqref{dyn-Rav-2-12-2021-b}, we obtain
\begin{eqnarray*}
z_{k+1} - z_{k}&=& (y_{k+1}  - y_{k})+ h(t_{k+2}-1)\Big( v_{k+1} + h \nabla f_{k}( y_{k})  \Big) - h(t_{k+1}-1)\Big( v_{k} + h \nabla f_{k-1}( y_{k-1})  \Big)  \\
&=& h v_{k+1} - h\Big( v_{k+1} + h \nabla f_{k}( y_{k})  \Big) \
-h^2 (t_{k+1} -1) \nabla f_{k} (y_{k}) 
= - h^2 t_{k+1}  \nabla f_{k} (y_{k}). 
\end{eqnarray*}
This together with the definition of $z_k$ yields
\[
z_{k+1}=  z_k - h^2 t_{k+1}  \nabla f_{k} (y_{k}) = y_{k} + h(t_{k+1}-1)\Big( v_{k} + h \nabla f_{k-1}( y_{k-1})  \Big)
- h^2 t_{k+1} \nabla f_{k} (y_{k}) .
\]
Plugging this into \eqref{elem-ineq}, we deduce that
\begin{eqnarray*}
&&\frac{1}{2}\|z_{k+1}-x^\star\|^2 -\frac{1}{2}\|z_k-x^\star\|^2  =  - \frac{1}{2}h^4 {t}^2_{k+1} \| \nabla f_{k} (y_{k}) \|^2 \\
&& - h^2t_{k+1} \left\langle   \nabla f_{k} (y_{k}) , y_{k}  - x^\star + h(t_{k+1}-1)\Big( v_{k} + h \nabla f_{k-1}( y_{k-1})  \Big)
- h^2 t_{k+1} \nabla f_{k} (y_{k}) \right\rangle\\
&&= \demi  h^4 {t}^2_{k+1} \| \nabla f_{k} (y_{k}) \|^2 
 -  h^2 t_{k+1} \left\langle   \nabla f_{k} (y_{k}) ,  y_{k}  - x^\star + h(t_{k+1}-1)\Big( v_{k} + h \nabla f_{k-1}( y_{k-1})  \Big)\right\rangle.
\end{eqnarray*}
Let us arrange the above expression so as to group the products of  $\nabla f_{k} (y_{k})$. For this, we use \eqref{dyn-Rav-2-12-2021_p} again, written as,
\begin{equation}\label{dyn-Rav-2-12-2021-d}
 (t_{k+1} -1) (v_{k} + h \nabla f_{k-1} (y_{k-1}))
= t_{k+2}(v_{k+1} + h \nabla f_{k} (y_{k})) + h(t_{k+1} -1) \nabla f_{k} (y_{k}).
\end{equation}
Therefore,
\begin{align*}
&y_{k}  - x^\star + h(t_{k+1}-1)\Big( v_{k} + h \nabla f_{k-1}( y_{k-1})  \Big) \\
&= y_{k}  - x^\star +ht_{k+2}(v_{k+1} + h \nabla f_{k} (y_{k})) + h^2(t_{k+1} -1) \nabla f_{k}(y_{k}) \\
&= y_{k}  - x^\star + ht_{k+2} v_{k+1} + h^2 (t_{k+2} + t_{k+1} -1) \nabla f_{k} (y_{k}).
\end{align*}
Collecting the above results we obtain
\begin{multline*}
\frac{1}{2}\|z_{k+1}-x^\star\|^2 -\frac{1}{2}\|z_k-x^\star\|^2 = \demi h^4 {t}^2_{k+1} \| \nabla f_{k} (y_{k}) \|^2 \\
 -  h^2 t_{k+1} \left\langle   \nabla f_{k} (y_{k}) ,  y_{k}  - x^\star + ht_{k+2} v_{k+1} 
 +h^2 (t_{k+2} + t_{k+1} -1) \nabla f_{k} (y_{k})\right\rangle.
\end{multline*}
Inserting this in \eqref{Lyap-10} we get
\begin{align}
&E_{k+1} - E_k 
\leq h^2 (t_{k+1}-1)t_{k+1}( f (y_{k})-  f (y_{k-1}) )  \nonumber \\
& + h^2 \Big((t_{k+2}-1)t_{k+2} - (t_{k+1}-1)t_{k+1}\Big) ( f (y_{k})-  \min f ) +\demi  h^4 {t}^2_{k+1} \| \nabla f_{k} (y_{k}) \|^2   \nonumber \\
& - h^2 t_{k+1} \left\langle   \nabla f_{k} (y_{k}) ,  y_{k}  - x^\star + ht_{k+2} v_{k+1} 
  + h^2 (t_{k+2} + t_{k+1} -1) \nabla f_{k} (y_{k})\right\rangle.
 \label{basicEk} 
\end{align}
In view of the basic gradient inequality for convex differentiable functions whose gradient is $L$-Lipschitz continuous, we have
\begin{align*}
f (y_{k-1}) &\geq  f (y_{k})+  \left\langle   \nabla f (y_{k}) ,  y_{k-1} - y_{k} \right\rangle + \frac{1}{2L}\|\nabla f (y_{k}) -\nabla f (y_{k-1})  \|^2. \\
\min f &\geq  f (y_{k})+  \left\langle   \nabla f (y_{k}) ,  x^\star - y_{k} \right\rangle + \frac{1}{2L}\norm{\nabla f(y_{k})}^2 .
\end{align*}
Combining the above inequalities with \eqref{basicEk}, and using $\nabla f_{k} (y_{k}) = \nabla f (y_{k}) +  e_{k}$,
we get
\begin{align}
& E_{k+1} - E_k \leq -h^2 (t_{k+1}-1)t_{k+1}\Big(\left\langle \nabla f (y_{k}) ,  y_{k-1} - y_{k} \right\rangle + \frac{1}{2L}\|\nabla f (y_{k}) -\nabla f (y_{k-1})  \|^2 \Big) \nonumber\\
& +h^2 \Big((t_{k+2}-1)t_{k+2} - (t_{k+1}-1)t_{k+1}\Big) ( f (y_{k})-  \min f ) - h^2 t_{k+1}\pa{f (y_{k}) - \min f}  \nonumber\\
&  +\demi  h^4 {t}^2_{k+1} \| \nabla f_{k} (y_{k}) \|^2 -  h^2 t_{k+1} \left\langle   \nabla f_{k} (y_{k}) ,   ht_{k+2} v_{k+1} +h^2 (t_{k+2} + t_{k+1} -1) \nabla f_{k} (y_{k})\right\rangle \nonumber\\
& -h^2 t_{k+1} \dotp{y_{k}-x^\star}{e_k} . \label{basicEk_2} 
\end{align}
Next rearrange the last inequality by grouping terms on the right hand side with common expressions. To begin with, rewrite the second and third summand as follows:
\begin{multline*}
h^2 \Big((t_{k+2}-1)t_{k+2} - (t_{k+1}-1)t_{k+1}\Big) ( f (y_{k})-  \min f ) - h^2 t_{k+1}(f (y_{k}) - \min f) 
=  \\
-h^2 \Big(t_{k+1}^2 - t_{k+2}^2 +  t_{k+2}\Big)( f (y_{k})-  \min f ) .
\end{multline*}
For the following expression grouping two of the summands above, we use the definition of $v_{k}$ for the first equality, and  the constitutive equation
\eqref{dyn-Rav-2-12-2021-d} for the third,
\begin{align*}
& -h^2 (t_{k+1}-1)t_{k+1}\left\langle   \nabla f (y_{k}) ,  y_{k-1} - y_{k} \right\rangle -  h^2 t_{k+1} \left\langle   \nabla f_{k} (y_{k}) ,   ht_{k+2} v_{k+1} 
\right\rangle 
 \\
&= h^3 (t_{k+1}-1)t_{k+1}\left\langle   \nabla f (y_{k}) ,  v_{k}\right\rangle -  h^3 t_{k+1} t_{k+2}\left\langle   \nabla f_{k} (y_{k}) , v_{k+1} \right\rangle \\
&= h^3 t_{k+1} \left\langle   \nabla f (y_{k}) , (t_{k+1}-1) v_{k}- t_{k+2} v_{k+1} \right\rangle -  h^3 t_{k+1} t_{k+2} \dotp{v_{k+1}}{e_{k}}    \\
&=  h^3t_{k+1} \Big(\left\langle   \nabla f (y_{k}) , - h (t_{k+1} -1) \nabla f_{k-1} (y_{k-1}) +h(t_{k+1} +t_{k+2} -1) \nabla f_{k} (y_{k}) \right\rangle \Big) -  h^3 t_{k+1} t_{k+2} \dotp{v_{k+1}}{e_{k}} \\
&=  h^4t_{k+1} \Big(\left\langle   \nabla f (y_{k}) , - (t_{k+1} -1) \nabla f (y_{k-1}) + (t_{k+1} +t_{k+2} -1) \nabla f (y_{k}) \right\rangle \Big) \\
&-  h^3 t_{k+1} t_{k+2} \dotp{v_{k+1}}{e_{k}} 
+ h^4t_{k+1}(t_{k+1} +t_{k+2} -1) \dotp{\nabla f (y_{k})}{e_k}
- h^4 t_{k+1}(t_{k+1} -1) \dotp{\nabla f (y_{k})}{e_{k-1}} .
\end{align*}

In addition
\begin{equation*}
\demi  h^4 {t}^2_{k+1} \| \nabla f_{k} (y_{k}) \|^2 
-h^4 {t}_{k+1}(t_{k+2} + t_{k+1} -1)\| \nabla f_{k} (y_{k}) \|^2  
= -\frac{1}{2}h^4 t_{k+1} (2t_{k+2} + t_{k+1} -2)\| \nabla f_{k} (y_{k}) \|^2.
\end{equation*}
Collecting the last three estimates and applying the inequalities to \eqref{basicEk_2}, we obtain
\begin{align*}
& E_{k+1} - E_k  +h^2 \Big(t_{k+1}^2 - t_{k+2}^2 +  t_{k+2}\Big)( f (y_{k})-  \min f )\\
&\leq -\frac{h^2}{2L}(t_{k+1}-1)t_{k+1}  \|\nabla f (y_{k}) -\nabla f (y_{k-1})  \|^2  \\
&+ h^4t_{k+1} \Big(\left\langle   \nabla f (y_{k}) ,
  - (t_{k+1} - 1) \nabla f (y_{k-1}) + (t_{k+1} +t_{k+2} -1) \nabla f (y_{k}) \right\rangle \Big)\\
& -\frac{1}{2}h^4 t_{k+1} (2t_{k+2} + t_{k+1} -2)\| \nabla f (y_{k}) +  e_{k}\|^2\\
& -h^2 t_{k+1} \left\langle e_{k},y_{k}-x^\star\right\rangle
-  h^3 t_{k+1} t_{k+2} \dotp{v_{k+1}}{e_{k}} \\
&+ h^4t_{k+1}(t_{k+1} +t_{k+2} -1) \dotp{\nabla f (y_{k})}{e_k}
- h^4t_{k+1}(t_{k+1} -1) \dotp{\nabla f (y_{k})}{e_{k-1}} .
\end{align*}
After developing the expression $\|\nabla f (y_{k}) +  e_k\|^2$, we arrive at
\begin{align*}
& E_{k+1} - E_k  +h^2 \Big(t_{k+1}^2 - t_{k+2}^2 +  t_{k+2}\Big)( f (y_{k})-  \min f )\\
&\leq -\frac{h^2}{2L}(t_{k+1}-1)t_{k+1}  \|\nabla f (y_{k}) -\nabla f (y_{k-1})  \|^2  \\
&+ h^4t_{k+1} \Big(\left\langle   \nabla f (y_{k}) ,
  - (t_{k+1}-1) \nabla f (y_{k-1}) + (t_{k+1} +t_{k+2} -1) \nabla f (y_{k}) \right\rangle \Big)\\
& -\frac{1}{2}h^4 t_{k+1} (2t_{k+2} + t_{k+1} -2)\Big( \| \nabla f (y_{k})\|^2  + \| e_{k}\|^2 + 2\dotp{\nabla f (y_{k})}{e_k} \Big) \\
& -h^2 t_{k+1} \left\langle e_{k},y_{k}-x^\star\right\rangle
-  h^3 t_{k+1} t_{k+2} \dotp{v_{k+1}}{e_{k}} \\
&+ h^4t_{k+1}(t_{k+1} +t_{k+2} -1) \dotp{\nabla f (y_{k})}{e_k}
- h^4t_{k+1}(t_{k+1} -1) \dotp{\nabla f (y_{k})}{e_{k-1}} .
\end{align*}
Taking the expectation conditionally on $\filt_k$ and using conditional unbiasedness in \eqref{eq:K2+}, we get that $\Pas$
\begin{align*}
& \EX{E_{k+1}}{\filt_k} - E_k  + h^2 \Big(t_{k+1}^2 - t_{k+2}^2 +  t_{k+2}\Big)( f(y_{k})-  \min f )\\
&\leq -\frac{h^2}{2L}(t_{k+1}-1)t_{k+1}  \|\nabla f (y_{k}) -\nabla f (y_{k-1})  \|^2  \\
&+ h^4t_{k+1} \Big(\left\langle   \nabla f (y_{k}) ,
  - (t_{k+1} -1) \nabla f (y_{k-1}) + (t_{k+1} +t_{k+2} -1) \nabla f (y_{k}) \right\rangle \Big)\\
& -\frac{1}{2}h^4 t_{k+1} (2t_{k+2} + t_{k+1} -2)\| \nabla f (y_{k})\|^2  -\frac{1}{2}h^4 t_{k+1} (2t_{k+2} + t_{k+1} -2) \sigma_k^2 \\
&+  h^3 t_{k+1} t_{k+2} \EX{\norm{v_{k+1}}^2}{\filt_k}^{1/2} \sigma_k ,
\end{align*}
where we used Cauchy-Schwartz inequality in the last term. Now we rely on Theorem~\ref{thm:SNAG}, and in particular on \eqref{eq:thm:SNAGrecall} and \eqref{NAG22-c} to infer that
\begin{align*}
\|v_{k+1} \| 
= \frac{1}{h}\| y_{k+1} - y_{k}\|
&\leq \frac{1}{h}\|y_{k+1} - x_{k+1}\| + \frac{1}{h}\|x_{k+1} - x_{k}\| +  \frac{1}{h}\|x_{k} - y_{k}\| \\
&\leq \frac{2}{h}\|x_{k+1} - x_{k}\| + \frac{1}{h}\|x_{k} -x_{k-1}\| = o \left( \frac{1}{t_{k+1}} \right) + o \left( \frac{1}{t_{k}} \right) = o \left( \frac{1}{t_{k+1}} \right) \quad \Pas .
\end{align*}
In the last equality we used again that \eqref{eq:K1+} implies $t_{k+1} \leq 2t_{k}$. 
Therefore, there exists a non-negative random variable $\eta$ with $\esssup \eta < +\infty$ such that $\EX{\norm{v_{k+1}}^2}{\filt_k}^{1/2} \leq \eta/t_{k+1}$, and in turn
\begin{align*}
& \EX{E_{k+1}}{\filt_k} - E_k  + h^2 \Big(t_{k+1}^2 - t_{k+2}^2 +  t_{k+2}\Big)( f(y_{k})-  \min f )\\
&\leq -\frac{h^2}{2L}(t_{k+1}-1)t_{k+1}  \|\nabla f (y_{k}) -\nabla f (y_{k-1})  \|^2  \\
&+ h^3t_{k+1} \Big(\left\langle   \nabla f (y_{k}) ,
  - h (t_{k+1} -1) \nabla f (y_{k-1}) +h(t_{k+1} +t_{k+2} -1) \nabla f (y_{k}) \right\rangle \Big)\\
& -\frac{1}{2}h^4 t_{k+1} (2t_{k+2} + t_{k+1} -2)\| \nabla f (y_{k})\|^2  
+  4\eta h^3 t_{k} \sigma_k ,
\end{align*}
where we used again that $t_{k+2} \leq 4t_{k}$ and we discarded the term involving $\sigma_k^2$ since $t_k \geq 1$ and thus $2t_{k+2} + t_{k+1} -2 \geq 1$. Equivalently,
\begin{equation}\label{eq:basic_Lyap_040222}
\EX{E_{k+1}}{\filt_k} - E_k  + h^2 \Big(t_{k+1}^2 - t_{k+2}^2 +  t_{k+2}\Big)( f(y_{k})-  \min f )
\leq -R( \nabla f (y_{k-1}), \nabla f (y_{k})) + 4\eta h^3 t_{k} \sigma_k ,
\end{equation}
where $R$ is the quadratic form 
\begin{multline}
R(X,Y) = \frac{h^2}{2L}(t_{k+1}-1)t_{k+1} \|Y-X\|^2  +\frac{1}{2}h^4 t_{k+1} (2t_{k+2} + t_{k+1} - 2) \|Y\|^2  \\
- h^3t_{k+1}\pa{\dotp{Y}{- h (t_{k+1} -1) X +h(t_{k+1} +t_{k+2} -1) Y}} . \label{eq:basic_Lyap_3112}
\end{multline}
To conclude, we just need to prove that $R$ is nonnegative. A standard procedure consists in computing a lower-bound $\min_{X} R(X,Y)$ for fixed $Y$. By taking the derivative of $R$ with respect to $X$, we obtain that the minimum is achieved at $\bar{X}$ with $\bar{X}-Y= -h^2 LY$. Therefore, 
\begin{align*}
\min_X R(X,Y) 
&= \frac{h^2 L}{2}(t_{k+1}-1)t_{k+1} h^4\|Y\|^2  +\frac{1}{2}h^4 t_{k+1} (2t_{k+2} + t_{k+1} - 2)  \| Y \|^2\\
&- h^3t_{k+1}\pa{\dotp{Y}{- h (t_{k+1} -1)(1- h^2 L)Y +h(t_{k+1} + t_{k+2} - 1) Y}}.  
\end{align*}
After reduction, we get
\begin{equation} \label{R_positive}
\min_X R(X,Y) = \frac{h^4 t_{k+1}}{2}\pa{(t_{k+1} - 1) (2-h^2L) - 1}\|Y\|^2 .
\end{equation}
According to assumption \eqref{eq:K1+}, the coefficient of $f(y_{k}) - \min f$ in \eqref{eq:basic_Lyap_040222} is positive. We therefore discard this term in the rest of the proof. Combining \eqref{R_positive} with \eqref{eq:basic_Lyap_040222}, we obtain
\begin{equation*}
\EX{E_{k+1}}{\filt_k} - E_k 
\leq -\frac{h^4 t_{k+1}}{2}\pa{(t_{k+1} - 1) (2-h^2L) - 1}\|\nabla f(y_{k})\|^2 + 4\eta h^3 t_{k} \sigma_k .
\end{equation*}
Since $h^2 \in ]0,1/L]$ and $t_k \geq 1$, this can also be bounded as 
\begin{align*}
\EX{E_{k+1}}{\filt_k} 
&\leq E_k - \frac{h^2 t_{k+1}}{2L}\pa{(t_{k+1} -1) (2-h^2L) - 1}\|\nabla f(y_{k})\|^2 + \frac{4\eta h}{L} t_{k} \sigma_k\\
&\leq E_k - \frac{h^2 t_{k+1}(t_{k+1}-2)}{2L} \|\nabla f(y_{k})\|^2 + \frac{4\eta h}{L} t_{k} \sigma_k \\
&= E_k - \frac{h^2 t_{k+1}^2}{2L} \|\nabla f(y_{k})\|^2 + \frac{h^2 t_{k+1}}{L} \|\nabla f(y_{k})\|^2 + \frac{4\eta h}{L} t_{k} \sigma_k \\
&\leq E_k - \frac{h^2 t_{k+1}^2}{2L} \|\nabla f(y_{k})\|^2 + 2h^2 t_{k+1}(f(y_{k})-  \min f) + \frac{4\eta h}{L} t_{k} \sigma_k ,
\end{align*}
where we used co-coercivity of $\nabla f$ in the last inequality. The summability assumption in \eqref{eq:K2+} together with the summability result in Theorem~\ref{thm:SRAG} allow then to invoke Lemma~\ref{lem:RS} to get the claim. Observe that this also gives that $E_k$ converges $\Pas$ to a non-negative valued random variable. \qed
\end{proof}

\begin{remark}{
Since $t_k \geq 1$, a direct consequence of the gradient summability shown in Theorem~\ref{thm:SRAGgrad} is that the gradient sequence $\seq{\nabla f(y_k)}$ tends to zero $\Pas$ at least as quickly as at the rate $o(1/t_k)$. Observe also that this analysis gives another proof for the fast convergence of the function values (just carry on the proof starting from \eqref{eq:basic_Lyap_040222} without discarding the term involving the function values). 

Note that the above proof has been notably simplified by using the conclusions already obtained in Theorem~\ref{thm:SRAG}, and in particular to properly bound the terms involving $v_{k+1}$ (which are not in $\filt_k$). Extending this proof to the case where the step-size $s_k$ is varying appears to be straightforward, but comes at the price of tedious and longer computations. We avoid this for the sake of brevity.
}
\end{remark}

\section{Application to Particular Parameter Choices}\label{sec:special_cases}
Let consider the theoretical guarantees obtained under the condition that there exists $c\in [0,1[$ such that, for every $k\geq 1$ 
\begin{equation}\label{eq.special_class}
\frac{1}{1-\alpha_{k+1}}-\frac{1}{1-\alpha_{k}}\leq c .
\end{equation}
This implies some important properties of $t_k$. One significant observation is a trade-off between stability to errors and fast convergence of \eqref{eq:RAGgammakstoch}. Some choices of $\alpha_k$ will be less stringent on the required summability of the error variance for convergence, but will result in slower convergence rate and vice-versa.

In presenting the details, let us start with the following results that were obtained in \cite[Proposition 3.3, 3.4]{AC2}. The first one presents some general conditions on $(\alpha_k)$ and $c$ that ensure the satisfaction of \eqref{eq:K0} and \eqref{eq:K1} (resp. \eqref{eq:K1+}). The second one provides an explicit expression of $t_k$ as a function of $\alpha_k$. 
\begin{proposition}\label{pr.special_class_ineq}
Let $c\in[0,1[$ and let $\seq{\alpha_k}$ be a sequence satisfying $\alpha_k\in [0,1[$ together with inequality {\rm(\ref{eq.special_class})} for every $k\geq 1$. Then condition \eqref{eq:K0} is satisfied. Moreover, we have for every $k\geq 1$,
\[
t_{k+1}\leq \frac{1}{(1-c)(1-\alpha_k)}.
\]
If $c\leq 1/3$ (resp. $c<1/3$), then condition \eqref{eq:K1} (resp. \eqref{eq:K1+}) is fulfilled.
\end{proposition}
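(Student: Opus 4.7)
The plan is to work with the change of variable $\tau_k \eqdef 1/(1-\alpha_k)$, under which hypothesis~\eqref{eq.special_class} simply reads $\tau_{k+1}-\tau_k\leq c$. Iterating gives $\tau_{k+n}\leq \tau_k + cn$ for every $n\geq 0$, hence
\[
\alpha_{k+n}\leq \frac{\tau_k+cn-1}{\tau_k+cn}, \qquad n\geq 1.
\]

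Next, I would prove the explicit bound on $t_{k+1}$ by dominating the series defining it. Set $p_n \eqdef \prod_{m=1}^{n}\frac{\tau_k+cm-1}{\tau_k+cm}$, with $p_0=1$. The previous step gives $\prod_{j=k+1}^{k+n}\alpha_j\leq p_n$, so that $t_{k+1}\leq \sum_{n\geq 0}p_n$. The crux is the telescoping identity
\[
p_n(\tau_k+cn) \;-\; p_{n-1}\bpa{\tau_k+c(n-1)} \;=\; -(1-c)\,p_{n-1},
\]
which follows from $p_n(\tau_k+cn)=p_{n-1}(\tau_k+cn-1)$. Summing from $n=1$ to $N$ yields $p_N(\tau_k+cN) + (1-c)\sum_{n=0}^{N-1}p_n = \tau_k$; since every $p_n$ is non-negative, the partial sums are uniformly bounded by $\tau_k/(1-c)$. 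Letting $N\to +\infty$ gives $t_{k+1}\leq \tau_k/(1-c) = 1/\bpa{(1-c)(1-\alpha_k)}$, and the finiteness just obtained (combined with $t_1=1+\alpha_1 t_2$) already delivers \eqref{eq:K0}.

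For conditions~\eqref{eq:K1} and~\eqref{eq:K1+}, I would exploit the identity $t_{k+1}(1-\alpha_k)=t_{k+1}-t_k+1$ (a direct rewriting of~\eqref{inverse_tk}), which combined with the previous bound yields
\[
t_{k+1}-t_k \;\leq\; \frac{c}{1-c}.
\]
Factoring $t_{k+1}^2-t_k^2=(t_{k+1}-t_k)(t_{k+1}+t_k)$: when $t_{k+1}\leq t_k$ the left-hand side is non-positive, hence trivially bounded by $m\,t_{k+1}$ for any $m\geq 0$; otherwise $t_{k+1}+t_k\leq 2t_{k+1}$ and the previous estimate gives $t_{k+1}^2-t_k^2 \leq \frac{2c}{1-c}\,t_{k+1}$. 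Since $2c/(1-c)\leq 1$ iff $c\leq 1/3$ (and strictly less than $1$ iff $c<1/3$), this delivers~\eqref{eq:K1}, respectively~\eqref{eq:K1+} with $m=2c/(1-c)$.

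The main obstacle is spotting the telescoping identity for $p_n(\tau_k+cn)$ in the first step; once that is in hand, the rest of the argument is purely algebraic bookkeeping, and the $t_{k+1}\leq t_k$ branch nicely bypasses the need to rule out non-monotone behaviour of $\seq{t_k}$.
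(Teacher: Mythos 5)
Your proof is correct. Note that the paper itself does not prove this proposition — it imports it verbatim from Attouch--Cabot \cite{AC2} (Propositions~3.3 and 3.4) — so your argument serves as a self-contained reconstruction, and it is in the same spirit as the original: compare $\alpha_{k+n}$ with the benchmark sequence $\frac{\tau_k+cn-1}{\tau_k+cn}$ obtained from iterating $\tau_{k+1}-\tau_k\leq c$ with $\tau_k=1/(1-\alpha_k)$, and exploit the exact telescoping $p_n(\tau_k+cn)=p_{n-1}(\tau_k+cn-1)$ to bound the partial sums by $\tau_k/(1-c)$, which yields both \eqref{eq:K0} and the bound $t_{k+1}\leq \frac{1}{(1-c)(1-\alpha_k)}$ (with $t_1$ handled through $t_1=1+\alpha_1 t_2$). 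The passage to \eqref{eq:K1} and \eqref{eq:K1+} is also sound: the identity $t_{k+1}(1-\alpha_k)=t_{k+1}-t_k+1$, which is just \eqref{inverse_tk} rearranged (legitimate once summability is established), gives $t_{k+1}-t_k\leq \frac{c}{1-c}$, and the factorization $t_{k+1}^2-t_k^2=(t_{k+1}-t_k)(t_{k+1}+t_k)$ together with the two branches $t_{k+1}\leq t_k$ and $t_{k+1}>t_k$ (using $t_{k+1}+t_k\leq 2t_{k+1}$ and $t_{k+1}\geq 1$) yields the constant $m=\frac{2c}{1-c}$, which is $\leq 1$ exactly when $c\leq 1/3$ and $<1$ exactly when $c<1/3$, as required.
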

\begin{proposition}\label{pr.special_class_equiv}
Let $\seq{\alpha_k}$ be a sequence such that $\alpha_k\in [0,1[$ for every $k\geq 1$. Given $c\in [0,1[$, assume that
\begin{equation*}
\lim_{k\to +\infty}\frac{1}{1-\alpha_{k+1}}-\frac{1}{1-\alpha_{k}}=c.
\end{equation*}
Then, we have
\[
t_{k+1}\sim \frac{1}{(1-c)(1-\alpha_k)} \quad \mbox{ as } k\to +\infty.
\]
\end{proposition}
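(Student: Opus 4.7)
The plan is to prove $t_{k+1}(1-\alpha_k)\to 1/(1-c)$. With $u_k:=1/(1-\alpha_k)$ and $s_k:=t_{k+1}-t_k$, the inverse transform \eqref{inverse_tk}, rewritten as $t_k=1+\alpha_k t_{k+1}$, yields the clean identity $t_{k+1}(1-\alpha_k)=1+s_k$, so the target reduces to $s_k\to L$, where $L:=c/(1-c)$.

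The upper bound $\limsup_k s_k\leq L$ follows directly from Proposition~\ref{pr.special_class_ineq}. Given $\epsilon>0$ with $c+\epsilon<1$, the hypothesis yields $K_\epsilon$ such that $u_{k+1}-u_k\leq c+\epsilon$ for $k\geq K_\epsilon$. Since $t_k$ depends only on $(\alpha_j)_{j\geq k}$, applying Proposition~\ref{pr.special_class_ineq} to the tail sequence gives $t_{k+1}\leq u_k/(1-c-\epsilon)$ for $k\geq K_\epsilon$; letting $\epsilon\downarrow 0$ yields the bound.

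The matching lower bound is the main obstacle. Combining the identities $t_{k+1}=u_k(1+s_k)$, $t_{k+2}=u_{k+1}(1+s_{k+1})$, and $t_{k+2}-t_{k+1}=s_{k+1}$ produces the one-step recursion
\[
(u_{k+1}-1)\,s_{k+1}\;=\;u_k\,s_k\;-\;(u_{k+1}-u_k) .
\]
A first-order expansion, using $u_{k+1}-u_k\to c$, then gives
\[
s_{k+1}-s_k\;=\;\frac{s_k(1-c)-c}{u_k}\;+\;o(1/u_k) ,
\]
which identifies $L$ as the attractive fixed point of the discrete dynamics. I would argue by contradiction: if $\liminf_k s_k<L$, fix $\delta>0$ with $\liminf_k s_k<L-\delta$; then for $k$ large enough, whenever $s_k\leq L-\delta/2$ the leading term above is bounded by a strictly negative multiple $-\eta/u_k$, so $s_{k+1}\leq s_k-\eta/u_k$, and once the iterate dips below $L-\delta/2$ it remains trapped there. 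Since $s_k$ is bounded (the upper bound gives $s_k\leq L+o(1)$, and $t_{k+1}\geq 1$ gives $s_k\geq -1+1/u_k$), telescoping over $k\geq N$ would force $\sum_{k\geq N}1/u_k<+\infty$, contradicting $u_k=O(k)$ (which holds because $u_{k+1}-u_k$ is bounded). Hence $\liminf_k s_k\geq L$, completing the proof.

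The delicate points are controlling the $o(1/u_k)$ remainder uniformly so that the monotone-trapping argument is genuinely rigorous, and handling the edge case $c=0$ with $u_k$ bounded (where $u_k$ does not diverge): in that regime $\alpha_k$ is eventually bounded away from $1$, so $t_k$ is automatically bounded, and iterating the simple identity $t_k-u_k=\alpha_k(t_{k+1}-u_{k+1})+\alpha_k(u_{k+1}-u_k)$ shows directly that $t_k-u_k\to 0$, whence $t_k/u_k\to 1=1/(1-c)$ since $u_k\geq 1$.
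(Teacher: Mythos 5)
First, a point of comparison: the paper does not prove this proposition at all --- it is quoted from \cite{AC2} (Propositions 3.3, 3.4) --- so your argument is a self-contained substitute rather than a variant of an in-paper proof. Your reduction is correct and clean: \eqref{inverse_tk} gives $t_{k+1}(1-\alpha_k)=1+(t_{k+1}-t_k)$, so the claim is exactly $t_{k+1}-t_k\to L:=c/(1-c)$; the upper bound $\limsup_k (t_{k+1}-t_k)\le L$ does follow from Proposition~\ref{pr.special_class_ineq} applied to tail sequences (legitimate, since $t_k$ only depends on $(\alpha_j)_{j\ge k}$), and the one-step recursion $(u_{k+1}-1)s_{k+1}=u_k s_k-(u_{k+1}-u_k)$ is verified by direct computation (beware that your $s_k$ clashes with the paper's step-size notation). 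For the lower bound, you do not need the expansion with its $o(1/u_k)$ remainder: work with the exact identity $s_{k+1}-s_k=\bigl(s_k(1-d_k)-d_k\bigr)/(u_{k+1}-1)$, $d_k:=u_{k+1}-u_k$. Once $k$ is large enough that $d_k$ is close to $c$ and (in the regime $u_k\to+\infty$) $u_{k+1}-1\in[\tfrac12 u_k,\,2u_k]$, the hypothesis $s_k\le L-\delta/2$ forces $s_{k+1}\le s_k-\eta/u_k$ for a fixed $\eta>0$, and your telescoping contradiction with $u_k=O(k)$ and $s_k>-1$ is then fully rigorous. So the ``delicate point'' you flag is easily discharged.

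The genuine gap is that your dichotomy is not exhaustive. The trapping argument needs $u_k\to+\infty$ (automatic only when $c>0$), while your edge case assumes $u_k$ bounded; but when $c=0$ one can have $\limsup_k u_k=+\infty$ with $\liminf_k u_k<+\infty$ (increments tending to $0$ permit slow excursions), and then neither branch applies as written: ``does not diverge'' is not ``bounded'', and $\alpha_k$ need not be eventually bounded away from $1$, so $t_k$ need not be bounded; moreover $u_{k+1}-1$ may be arbitrarily small or zero, which breaks the division in the recursion. The cheap repair is to treat the whole case $c=0$ by your contraction identity without any boundedness assumption: iterating $t_k-u_k=\alpha_k(t_{k+1}-u_{k+1})+\alpha_k(u_{k+1}-u_k)$ gives, for every $m$, $|t_k-u_k|\le \bigl(\prod_{j=k}^{k+m-1}\alpha_j\bigr)|t_{k+m}-u_{k+m}|+\sup_{j\ge k}|u_{j+1}-u_j|\,(t_k-1)$; the tail products are summable in $m$ (their sum is $t_k-1$) while $|t_{k+m}-u_{k+m}|=O(k+m)$ by the already-established upper bound, so the first term vanishes along a subsequence of $m$, and the second tends to $0$ as $k\to\infty$ since $c=0$. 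Hence $t_k/u_k\to1$, and $t_{k+1}/u_k\to1$ follows from $u_{k+1}-u_k\to0$ and $u_k\ge1$. With the split ``$c>0$ (so $u_k\to+\infty$, trapping argument)'' versus ``$c=0$ (identity argument)'', your proof is complete and correct.
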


\noindent
Let us now consider several possible iterative regimes defining $\alpha_k$.
\subsection{Case 1: $\displaystyle{\alpha_k=1-\frac{\alpha}{k}}$, $\alpha>0$:}
This corresponds to the choice made in the (deterministic) Nesterov and Ravine methods studied in \cite{AF}.
In this case, for every $k\geq 1$,
\[
\frac{1}{1-\alpha_{k+1}}-\frac{1}{1-\alpha_{k}}=\frac{k+1}{\alpha}-\frac{k}{\alpha}=\frac{1}{\alpha}.
\]
Therefore, condition \eqref{eq.special_class} is satisfied with $c=\frac{1}{\alpha}$. If $\alpha\geq 3$ (resp. $\alpha>3$), we have $c\in ]0,1/3]$ (resp. $c\in ]0,1/3[$). According to Proposition \ref{pr.special_class_equiv}, we have for every $k\geq 1$,
\[
t_{k+1}\sim \frac{1}{(1-c)(1-\alpha_k)}=\frac{\alpha}{\alpha-1}\,\frac{k}{\alpha}=\frac{k}{\alpha-1}.
\]
Indeed, one can easily show that the equality $t_{k+1}=\frac{k}{\alpha-1}$ is satisfied. Moreover, 
\[
t_{k+1}/t_k^2 = k(\alpha-1)/(k-1)^2 \geq (\alpha-1)/(k-1) \Rightarrow \sum_{k \in \N} \frac{t_{k+1}}{t_k^2} = +\infty .
\]
Thus, specializing Theorem~\ref{thm:SRAG} and Theorem~\ref{thm:SRAGgrad}, we obtain the following statement.
\begin{corollary}\label{co.alpha_k=1-alpha/k}
Assume that \eqref{assum:H} holds. Let $\seq{y_k}$ be the sequence generated by \ref{eq:RAGgammakstoch} with $\alpha_k=1-\frac{\alpha}{k}$ where $\alpha > 3$, and $s_k \in ]0,1/L]$ is a non-increasing sequence. Assume that 
\[
\EX{e_k}{\filt_k} = 0 \; \Pas \quad \text{and} \quad \seq{k s_k \sigma_k} \in \ell^1_+(\Filt) .
\] 
Then, the following holds $\Pas$:
\begin{enumerate}[(i)]
\item 
$f(y_k)-  \min_{\cH} f = o \left(\frac{1}{s_k k^2}\right)$ and $\|y_k-y_{k-1}\|=o\left(\frac{1}{k}\right)$ ;

\item
$\displaystyle\sum_{k \in \N}ks_k(f(y_k)-\min_{\cH} f)<+\infty$ and $\displaystyle\sum_{k \in \N}k\|y_k-y_{k-1}\|^2<+\infty$ ;

\item If moreover $\inf_k s_k > 0$, then $\sum_{k \in \N}  k^2 \|\nabla f (y_k) \|^2 < +\infty$ and $\seq{y_k}$ converges weakly $\Pas$ to an $\argmin(f)$-valued random variable.
\end{enumerate}
\end{corollary}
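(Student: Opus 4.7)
The plan is to specialize the three main theorems (Theorem~\ref{thm:SNAG}, Theorem~\ref{thm:SRAG}, Theorem~\ref{thm:SRAGgrad}) to the sequence $\alpha_k = 1 - \alpha/k$. The first task is to verify their hypotheses. A direct computation gives $\frac{1}{1-\alpha_{k+1}} - \frac{1}{1-\alpha_k} = \frac{1}{\alpha}$ for every $k \geq 1$, so condition \eqref{eq.special_class} holds with $c = 1/\alpha < 1/3$ (since $\alpha > 3$). Proposition~\ref{pr.special_class_ineq} then guarantees \eqref{eq:K0} and \eqref{eq:K1+}. A straightforward induction using the inversion formula \eqref{inverse_tk} shows that $t_{k+1} = k/(\alpha-1)$ for every $k \geq 1$; in particular, $t_k \sim k/(\alpha-1)$, and
\[
\frac{t_{k+1}}{t_k^2} = \frac{(\alpha-1)k}{(k-1)^2} \geq \frac{\alpha-1}{k-1},
\]
so $\sum_k t_{k+1}/t_k^2 = +\infty$. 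Moreover, the equivalence $t_k \sim k/(\alpha-1)$ shows that the corollary's assumption $\seq{k s_k \sigma_k} \in \ell^1_+(\Filt)$ is exactly \eqref{eq:K2+}.

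Claim (i) on values follows immediately from Theorem~\ref{thm:SRAG}, which yields $f(y_k) - \min f = o(1/(s_k t_k^2)) = o(1/(s_k k^2))$. For the iterate differences, I would transfer from the Nesterov companion sequence using the relation $y_k = x_k + \alpha_k(x_k - x_{k-1})$ from Theorem~\ref{Nest_Ravine}, which gives
\[
\|y_k - y_{k-1}\| \leq (1 + \alpha_k)\|x_k - x_{k-1}\| + \alpha_{k-1}\|x_{k-1} - x_{k-2}\| \leq 2\|x_k - x_{k-1}\| + \|x_{k-1} - x_{k-2}\|.
\]
Theorem~\ref{thm:SNAG}\eqref{thm:Thm-Nest-stoch-claim2} provides $\|x_k - x_{k-1}\| = o(1/t_k) = o(1/k)$, so $\|y_k - y_{k-1}\| = o(1/k)$, completing claim (i).

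For claim (ii), the summability $\sum_k k s_k (f(y_k) - \min f) < +\infty$ follows directly from the first summability statement of Theorem~\ref{thm:SRAG}. The summability $\sum_k k\|y_k - y_{k-1}\|^2 < +\infty$ is obtained by squaring the bound above, applying $(a+b)^2 \leq 2(a^2+b^2)$, and invoking the second summability $\sum_k t_k \|x_k - x_{k-1}\|^2 < +\infty$ from Theorem~\ref{thm:SNAG}\eqref{thm:Thm-Nest-stoch-claim2} together with $t_k \sim k/(\alpha-1)$. Claim (iii) follows by applying Theorem~\ref{thm:SRAGgrad}, which gives $\sum_k t_{k+1}^2 \|\nabla f(y_k)\|^2 < +\infty$, translating via $t_{k+1}^2 = k^2/(\alpha-1)^2$ into the desired $\sum_k k^2 \|\nabla f(y_k)\|^2 < +\infty$. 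The weak almost sure convergence of $\seq{y_k}$ to a random variable valued in $\argmin(f)$ is the final assertion of Theorem~\ref{thm:SRAG}, which requires precisely $\inf_k s_k > 0$.

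The main obstacle is a formal mismatch in step-size assumptions: Theorem~\ref{thm:SRAGgrad} is stated for constant $s_k \equiv s$, whereas the corollary allows any non-increasing $s_k$ bounded away from zero. I would resolve this by appealing to the remark immediately following Theorem~\ref{thm:SRAGgrad}, where the authors state that the extension to variable step-sizes is routine (if computationally tedious). Concretely, this requires replacing $h^2 = s$ by $h_k^2 = s_k$ in the Lyapunov construction of that proof and using non-increasingness of $s_k$ to absorb the telescoping remainders into terms of the same form, without altering the structure of the estimate. A minor technical point, which I would dispatch at the outset, is that $\alpha_k = 1 - \alpha/k$ is only in $[0,1]$ for $k \geq \lceil \alpha \rceil$; this is handled by running the iteration from a sufficiently large $k_0$ and is standard in the literature for this choice of extrapolation coefficient.
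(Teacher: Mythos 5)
Your proposal is correct and follows essentially the same route as the paper: verify \eqref{eq.special_class} with $c=1/\alpha<1/3$ (hence \eqref{eq:K0}, \eqref{eq:K1+} via Proposition~\ref{pr.special_class_ineq}), compute $t_{k+1}=k/(\alpha-1)$ and $\sum_k t_{k+1}/t_k^2=+\infty$, and then specialize Theorems~\ref{thm:SRAG} and \ref{thm:SRAGgrad}, transferring the statements on $\|y_k-y_{k-1}\|$ from the Nesterov companion sequence exactly as the paper does implicitly in its proofs. Your explicit handling of the two points the paper glosses over --- the constant step-size hypothesis of Theorem~\ref{thm:SRAGgrad} versus the non-increasing $s_k$ of the corollary, and the fact that $\alpha_k=1-\alpha/k\notin[0,1]$ for $k<\alpha$ --- is a welcome extra degree of care, not a deviation in method.
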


Another possible choice would be $\alpha_k=\frac{k}{k+\alpha}$ in which case we obtain exactly the same results as in Corollary~\ref{co.alpha_k=1-alpha/k}. This corresponds to the popular choice of the the Nesterov extrapolation parameter. For \ref{eq:NAGalphakstoch} with this choice of $\alpha_k$, we recover and complete the results obtained in the literature; see e.g., \cite{AR,AZ,LanBook,Laborde2020}.

\subsection{Case 2: $\displaystyle{\alpha_k=1-\frac{\alpha}{k^r}}$, $\alpha>0$, $r\in ]0,1[$:}
In this case, we have
\[
\frac{1}{1-\alpha_{k+1}}-\frac{1}{1-\alpha_{k}}=\frac{1}{\alpha}(k+1)^r-\frac{1}{\alpha}k^r=\frac{k^r}{\alpha}\left((1+1/k)^r-1\right)\sim \frac{r}{\alpha}\, k^{r-1}\to 0 \quad \mbox{as } k\to +\infty.
\]
For each $c>0$, the condition $1/(1-\alpha_{k+1})-1/(1-\alpha_k)\leq c$ is satisfied for $k$ large enough. On the other hand, we deduce from Proposition \ref{pr.special_class_equiv} that 
$\displaystyle{t_k\sim \frac{k^r}{\alpha}}$ as $k\to +\infty$. This implies that $\displaystyle{\sum_{i=1}^k t_i\sim \frac{1}{\alpha(1+r)}\,k^{1+r}}$  as $k\to +\infty$.
Theorem~\ref{thm:SRAG} and Theorem~\ref{thm:SRAGgrad} under this specification yields the following result.
\begin{corollary}\label{co.alpha_k=1-alpha/k^r}
Assume that \eqref{assum:H} holds. Let $\seq{y_k}$ be the sequence generated by \ref{eq:RAGgammakstoch} with $\alpha_k=1-\frac{\alpha}{k^r}$ where $\alpha > 0$ and $r \in ]0,1[$, and $s_k \in ]0,1/L]$ is an non-increasing sequence. Assume that 
\[
\EX{e_k}{\filt_k} = 0 \; \Pas \quad \text{and} \quad \seq{k^r s_k \sigma_k} \in \ell^1_+(\Filt) .
\] 
Then, the following holds $\Pas$:
\begin{enumerate}[(i)]
\item 
$f(y_k)-  \min_{\cH} f = o \left(\frac{1}{s_k k^{2r}}\right)$ and $\|y_k-y_{k-1}\|=o\left(\frac{1}{k^r}\right)$ ;

\item
$\displaystyle\sum_{k \in \N}k^rs_k(f(y_k)-\min_{\cH} f)<+\infty$ and $\displaystyle\sum_{k \in \N}k^r\|y_k-y_{k-1}\|^2<+\infty$ ;

\item If moreover $\inf_k s_k > 0$, then $\sum_{k \in \N}  k^{2r} \|\nabla f (y_k) \|^2 < +\infty$ and $\seq{y_k}$ converges weakly $\Pas$ to an $\argmin(f)$-valued random variable.
\end{enumerate}
\end{corollary}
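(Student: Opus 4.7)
The plan is to specialize Theorem~\ref{thm:SRAG} and Theorem~\ref{thm:SRAGgrad} to the choice $\alpha_k=1-\alpha/k^r$ by verifying each of the structural hypotheses on $(\alpha_k)$ and $(t_k)$, and then converting the abstract rates into the explicit $k$-rates in the statement via the asymptotic equivalent $t_k\sim k^r/\alpha$ that was established just before the corollary.

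First I would verify the hypotheses on the extrapolation sequence. The computation preceding the corollary already shows $\frac{1}{1-\alpha_{k+1}}-\frac{1}{1-\alpha_k}\sim \frac{r}{\alpha}k^{r-1}\to 0$, so for any $c\in ]0,1/3[$ the inequality \eqref{eq.special_class} holds for all $k$ large enough. Proposition~\ref{pr.special_class_ineq} then delivers \eqref{eq:K0} and \eqref{eq:K1+} (up to discarding finitely many initial indices, which does not affect any asymptotic statement). Next, applying Proposition~\ref{pr.special_class_equiv} with $c=0$ yields $t_{k+1}\sim (1-\alpha_k)^{-1}=k^r/\alpha$, so in particular $t_k\to+\infty$ with this sharp equivalent.

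Second I would check the two remaining abstract hypotheses. From $t_k\sim k^r/\alpha$ we get $t_{k+1}/t_k^2\sim \alpha/k^r$, which is non-summable since $r<1$, so $\sum_k t_{k+1}/t_k^2=+\infty$ as required by Theorem~\ref{thm:SRAG}. For the noise condition \eqref{eq:K2+}, the same equivalent shows $s_k t_k\sigma_k\sim \alpha^{-1}k^r s_k\sigma_k$, so the hypothesis $\seq{k^r s_k\sigma_k}\in\ell^1_+(\Filt)$ is exactly the summability required in \eqref{eq:K2+}.

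Third I would translate the abstract conclusions. Theorem~\ref{thm:SRAG} gives $f(y_k)-\min f=o(1/(s_k t_k^2))=o(1/(s_k k^{2r}))$ and $\sum_k s_k t_{k+1}(f(y_k)-\min f)<+\infty$, which is the function-value content of (i) and (ii). The iterate-increment estimates in (i) and (ii) follow exactly as in the proof of Theorem~\ref{thm:SRAG}: Theorem~\ref{thm:SNAG}\eqref{thm:Thm-Nest-stoch-claim2} yields $\|x_k-x_{k-1}\|=o(1/t_k)$ and $\sum_k t_k\|x_k-x_{k-1}\|^2<+\infty$ $\Pas$, and combining this with the bound $\|y_k-x_k\|\le\|x_k-x_{k-1}\|$ from \eqref{NAG22-c} and the triangle inequality transfers both statements to $\seq{y_k}$, giving $\|y_k-y_{k-1}\|=o(1/k^r)$ and $\sum_k k^r\|y_k-y_{k-1}\|^2<+\infty$. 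Finally, under $\inf_k s_k>0$, Theorem~\ref{thm:SRAGgrad} (extended to non-constant step size in a straightforward manner, as noted in the remark following its proof) supplies $\sum_k t_{k+1}^2\|\nabla f(y_k)\|^2<+\infty$, i.e., $\sum_k k^{2r}\|\nabla f(y_k)\|^2<+\infty$, while the weak $\Pas$ convergence of $\seq{y_k}$ to an $\argmin(f)$-valued random variable is precisely the last conclusion of Theorem~\ref{thm:SRAG}. There is no substantive obstacle in this proof; the only minor care needed is that $\frac{1}{1-\alpha_{k+1}}-\frac{1}{1-\alpha_k}\leq c$ holds only from some index onwards, but since all conclusions concern tail behavior, a shift of origin resolves this without affecting the stated results.
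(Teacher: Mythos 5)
Your proposal is correct and follows essentially the same route as the paper: verify \eqref{eq.special_class} asymptotically, invoke Propositions~\ref{pr.special_class_ineq} and \ref{pr.special_class_equiv} to get \eqref{eq:K0}, \eqref{eq:K1+} and $t_k\sim k^r/\alpha$, then specialize Theorems~\ref{thm:SRAG} and \ref{thm:SRAGgrad}. Your explicit handling of the finitely-many initial indices and of the transfer of the increment estimates from $\seq{x_k}$ to $\seq{y_k}$ (via \eqref{NAG22-c} and Theorem~\ref{thm:SNAG}\eqref{thm:Thm-Nest-stoch-claim2}) only makes explicit what the paper leaves implicit.
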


It is clear from this result that this choice of $\alpha_k$ allows for a less stringent summability condition on the stochastic errors, but this comes at the price of a slower convergence rate. We are not aware of any such a result in the literature.

\subsection{Case 3: $\alpha_k$ constant:}
This corresponds to the choice made in the Polyak's heavy ball with friction method \cite{Polyak_1,Polyak_2}. Since $\alpha_k \equiv \alpha\in [0,1[$ for every $k\geq 1$, condition \eqref{eq.special_class} is clearly satisfied with $c=0$. In turn, $t_k \equiv 1/(1-\alpha)$ for all $k \geq 1$. Applying Theorem~\ref{thm:SRAG} and Theorem~\ref{thm:SRAGgrad} we get the following.
\begin{corollary}\label{co.alpha_k_non-increasing}
Assume that \eqref{assum:H} holds. Let $\seq{y_k}$ be the sequence generated by \ref{eq:RAGgammakstoch} with $\alpha_k \equiv \alpha \in [0,1[$, and $s_k \in ]0,1/L]$ is an non-increasing sequence. Assume that 
\[
\EX{e_k}{\filt_k} = 0 \; \Pas \quad \text{and} \quad \seq{s_k \sigma_k} \in \ell^1_+(\Filt) .
\] 
Then, the following holds $\Pas$:
\begin{enumerate}[(i)]
\item
$\displaystyle\sum_{k \in \N}s_k(f(y_k)-\min_{\cH} f)<+\infty$ and $\displaystyle\sum_{k \in \N}\|y_k-y_{k-1}\|^2<+\infty$ ;

\item If moreover $\inf_k s_k > 0$, then $\sum_{k \in \N} \|\nabla f (y_k) \|^2 < +\infty$ and $\seq{y_k}$ converges weakly $\Pas$ to an $\argmin(f)$-valued random variable.
\end{enumerate}
\end{corollary}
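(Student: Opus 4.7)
The plan is to verify that the hypotheses of Theorem~\ref{thm:SRAG} and Theorem~\ref{thm:SRAGgrad} are satisfied in this setting, and then to translate their conclusions using the explicit value of $t_k$. First, I would compute $t_k$ directly from definition \eqref{def:tk}: with $\alpha_k \equiv \alpha \in [0,1[$, a geometric sum yields
\[
t_k = 1 + \sum_{i=k}^{+\infty} \alpha^{i-k+1} = \frac{1}{1-\alpha},
\]
so $t_k$ is a positive constant. Condition \eqref{eq:K0} holds since the tail sum is finite, and $t_{k+1}^2 - t_k^2 = 0$ makes \eqref{eq:K1+} hold trivially with any $m \in [0,1[$. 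Moreover $\sum_k t_{k+1}/t_k^2 = \sum_k (1-\alpha) = +\infty$, so the extra hypothesis required in Theorem~\ref{thm:SRAG} is satisfied. Finally, since $t_k$ is a positive constant, the summability assumption in \eqref{eq:K2+}, $\seq{s_k t_k \sigma_k} \in \ell^1_+(\Filt)$, is equivalent to the assumption $\seq{s_k \sigma_k} \in \ell^1_+(\Filt)$ made in the corollary.

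With these verifications, Theorem~\ref{thm:SRAG} immediately yields $\sum_k s_k t_{k+1}(f(y_k) - \min f) < +\infty$, which, thanks to the constancy of $t_k$, reduces to $\sum_k s_k(f(y_k) - \min f) < +\infty$. For the summability of the increments $\norm{y_k - y_{k-1}}^2$, I would exploit the correspondence between the Ravine and Nesterov sequences from Theorem~\ref{Nest_Ravine}: since $y_k = x_k + \alpha(x_k - x_{k-1})$, one has
\[
y_k - y_{k-1} = (1+\alpha)(x_k - x_{k-1}) - \alpha(x_{k-1} - x_{k-2}),
\]
so that $\norm{y_k - y_{k-1}}^2 \leq 2(1+\alpha)^2 \norm{x_k - x_{k-1}}^2 + 2\alpha^2 \norm{x_{k-1} - x_{k-2}}^2$. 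The summability of $\norm{x_k - x_{k-1}}^2$ then follows from Theorem~\ref{thm:SNAG}\eqref{thm:Thm-Nest-stoch-claim2} combined with the constancy of $t_k$. The weak convergence claim in (ii) under $\inf_k s_k > 0$ is a direct invocation of the last statement of Theorem~\ref{thm:SRAG}.

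For the gradient summability in (ii), Theorem~\ref{thm:SRAGgrad} gives $\sum_k t_{k+1}^2 \norm{\nabla f(y_k)}^2 < +\infty$, which with constant $t_k$ reduces to $\sum_k \norm{\nabla f(y_k)}^2 < +\infty$. The only obstacle I anticipate is a minor one: Theorem~\ref{thm:SRAGgrad} is stated for a constant step-size, whereas here $s_k$ is merely non-increasing with positive infimum. As the remark following Theorem~\ref{thm:SRAGgrad} indicates, extending the Lyapunov analysis to a non-increasing varying step-size is straightforward but notationally heavier (the computations in the $R(X,Y)$ argument carry through with $h_k = \sqrt{s_k}$ in place of $h$, using that $s_k$ is non-increasing to absorb the extra cross-terms). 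This is essentially the only piece of additional bookkeeping; otherwise the corollary is a direct specialization obtained by plugging $t_k \equiv 1/(1-\alpha)$ into the general estimates.
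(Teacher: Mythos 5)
Your proposal is correct and follows essentially the same route as the paper: the paper checks \eqref{eq.special_class} with $c=0$ (via Proposition~\ref{pr.special_class_ineq}) to get $t_k \equiv 1/(1-\alpha)$ and then specializes Theorem~\ref{thm:SRAG} and Theorem~\ref{thm:SRAGgrad}, which is the same specialization you perform, your direct verification of \eqref{eq:K0} and \eqref{eq:K1+} from the definition of $t_k$ being an equivalent shortcut. Your explicit derivation of $\sum_{k}\|y_k-y_{k-1}\|^2<+\infty$ through the Nesterov increments, and your remark that Theorem~\ref{thm:SRAGgrad} is stated for constant step-size, merely make explicit what the paper leaves implicit (the latter point is exactly the caveat acknowledged in the remark following that theorem).
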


For \ref{eq:NAGalphakstoch} with this choice of $\alpha_k$, we recover and complete the results obtained in the literature; see e.g., \cite{Yang2016,Gadat2018,Loizou2020,Sebbouh2021}.



\section{Conclusion, perspectives}
In this paper we studied the convergence properties of the stochastic Ravine optimization algorithm. We verified the intuition provided by recent analysis from the dynamics systems perspective showing that the Ravine and Nesterov accelerated gradient methods behave similarly, with identical convergence properties. Specifically, we showed that the same asymptotic guarantees as well as convergence rates apply with respect to function values, gradients and convergence of the iterates. 




\appendix

\section{Auxiliary lemmas}
We here collect some important results that play a crucial role in the convergence analysis of \ref{eq:NAGalphakstoch}.

\begin{lemma}[Convergence of non-negative almost supermartingales~\cite{RobbinsSiegmund}]\label{lem:RS}
Given a filtration $\RFilt=\seq{\Rfilt_k}$ and the sequences of real-valued random variables $\seq{r_k}\in\ell_+\pa{\RFilt}$, $\seq{a_k}\in\ell_+\pa{\RFilt}$, and $\seq{z_k}\in\ell_+^1\pa{\RFilt}$ satisfying, for each $k \in \N$
\[
\EX{r_{k+1}}{\Rfilt_k} - r_k \leq - a_k + z_k \; \Pas
\]
it holds that $\seq{a_k}\in\ell^1_+\pa{\RFilt}$ and $\seq{r_k}$ converges $\Pas$ to a random variable valued in $[0,+\infty[$.
\end{lemma}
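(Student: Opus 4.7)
The plan is to follow the classical Robbins--Siegmund proof: build an auxiliary process that is a supermartingale with respect to $\RFilt$, overcome its lack of a priori integrability by a localization argument, and then extract convergence of $\seq{r_k}$ and summability of $\seq{a_k}$ as corollaries.

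First, I would introduce the partial sums $A_k \eqdef \sum_{j=0}^{k-1} a_j$ and $Z_k \eqdef \sum_{j=0}^{k-1} z_j$ (with the convention $A_0 = Z_0 = 0$). These are non-negative, non-decreasing, and $\Rfilt_{k-1}$-measurable, hence $\Rfilt_k$-measurable. Setting
\[
M_k \eqdef r_k + A_k - Z_k ,
\]
and using that $A_{k+1}, Z_{k+1}$ are $\Rfilt_k$-measurable, the hypothesis yields, $\Pas$,
\[
\EX{M_{k+1}}{\Rfilt_k} = \EX{r_{k+1}}{\Rfilt_k} + A_{k+1} - Z_{k+1} \leq r_k - a_k + z_k + A_{k+1} - Z_{k+1} = M_k ,
\]
so $\seq{M_k}$ is a supermartingale adapted to $\RFilt$. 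The difficulty is that $M_k$ is only bounded below by $-Z_k$, and although $Z_\infty \eqdef \lim_k Z_k$ is $\Pas$ finite by $\seq{z_k}\in\ell^1_+\pa{\RFilt}$, it need not be integrable, so Doob's supermartingale convergence theorem cannot be invoked directly on $\seq{M_k}$.

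To circumvent this, I would localize. For each $N\in\N$, define $\tau_N \eqdef \inf\set{k\in\N : Z_{k+1} > N}$ (with $\inf\emptyset = +\infty$); this is a $\RFilt$-stopping time because $Z_{k+1}$ is $\Rfilt_k$-measurable. By optional stopping, $\seq{M_{k\wedge \tau_N}}$ is still a supermartingale, and by construction $Z_{k\wedge \tau_N}\leq N$ for every $k$, whence $M_{k\wedge \tau_N} + N \geq A_{k\wedge \tau_N} \geq 0$. The classical convergence theorem for non-negative supermartingales then gives $\Pas$ convergence of $\seq{M_{k\wedge \tau_N}}$ to an $\R$-valued limit. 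Since $\set{\tau_N = +\infty} = \set{Z_\infty \leq N}$, on this event $M_{k\wedge \tau_N} = M_k$ for all $k$, so $\seq{M_k}$ converges $\Pas$ on $\set{Z_\infty \leq N}$; taking the union over $N\in\N$ and invoking $Z_\infty < +\infty \; \Pas$ shows $\seq{M_k}$ converges $\Pas$ to a finite limit. Combined with $Z_k\to Z_\infty$, the sum $r_k + A_k = M_k + Z_k$ converges $\Pas$ to a finite limit; as $\seq{A_k}$ is non-decreasing and $r_k\geq 0$, $\seq{A_k}$ is $\Pas$ bounded, hence converges $\Pas$ in $[0,+\infty[$, which is exactly $\seq{a_k}\in\ell^1_+\pa{\RFilt}$; then $r_k = (M_k + Z_k) - A_k$ converges $\Pas$, and non-negativity pins its limit in $[0,+\infty[$.

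The main obstacle is the one isolated above: $M_k$ is only almost surely bounded below, not in $L^1$, so a direct application of the supermartingale convergence theorem is not available. Localization by $\tau_N$ on the non-decreasing sequence $\seq{Z_k}$ reduces the problem to countably many non-negative supermartingales, and $\Pas$ finiteness of $Z_\infty$ ensures that the resulting pieces reassemble into a set of full probability.
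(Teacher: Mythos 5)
Your argument is essentially correct, but note that the paper itself gives no proof of this lemma: it is imported verbatim from Robbins and Siegmund \cite{RobbinsSiegmund}, so there is no in-paper proof to compare against. What you have written is in substance the classical Robbins--Siegmund argument specialized to the case without a multiplicative term: pass to $M_k = r_k + A_k - Z_k$, check the supermartingale inequality, and localize with the stopping times $\tau_N$ attached to the predictable, non-decreasing sequence $\seq{Z_k}$ (your observation that $\tau_N$ is a stopping time because $Z_{k+1}$ is $\Rfilt_k$-measurable is exactly the point that makes optional stopping legitimate here). One small technical caveat: the lemma as stated assumes no integrability of $r_k$, $a_k$, $z_k$, whereas the ``classical convergence theorem for non-negative supermartingales'' you invoke is usually stated for $L^1$ processes. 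This is easily repaired within your own scheme: either appeal to the convergence theorem for positive supermartingales in the generalized sense (conditional expectations taken in $[0,+\infty]$, as in Neveu), or add one more truncation, working on the events $\set{r_0 \leq m}\in\Rfilt_0$, on which $\E{\pa{M_{k\wedge\tau_N}+N}\mathbf{1}_{\set{r_0\leq m}}} \leq m+N$ so that the stopped, truncated process is a genuine integrable non-negative supermartingale; taking the union over $m\in\N$ and then over $N\in\N$ recovers a set of full probability, and the rest of your extraction of the convergence of $r_k$ and the summability of $a_k$ goes through unchanged.
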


The following lemma is a consequence of Lemma~\ref{lem:RS}; see also the discussion in \cite[Section~3]{RobbinsSiegmund}.
\begin{lemma}\label{lem:RSsum}
Given a filtration $\RFilt=\seq{\Rfilt_k}$, let the sequence of random variables $\seq{\varepsilon_k} \in \ell_+(\RFilt)$ such that $\seq{\pa{\EX{\varepsilon_k^2}{\Rfilt_{k-1}}}^{1/2}} \in \ell_+^1\pa{\RFilt}$. Then
\[
\sum_{k \in \N} \varepsilon_k < +\infty \quad \Pas .
\]
\end{lemma}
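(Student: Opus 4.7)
The plan is to reduce the claim to the Robbins--Siegmund lemma (Lemma~\ref{lem:RS}) applied to the partial-sum process $S_k \eqdef \sum_{i=1}^{k} \varepsilon_i$. Set $\sigma_k \eqdef \pa{\EX{\varepsilon_k^2}{\Rfilt_{k-1}}}^{1/2}$, so that $\sigma_k$ is $\Rfilt_{k-1}$- (hence $\Rfilt_k$-)measurable, and by hypothesis $\seq{\sigma_k}\in \ell^1_+(\RFilt)$, i.e.\ $\sum_k \sigma_k < +\infty$ $\Pas$.

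Since $\varepsilon_k\geq 0$ and $\Rfilt_k$-measurable for each $k$, the process $\seq{S_k}$ lies in $\ell_+(\RFilt)$. The key step is to control its conditional increments. Applying conditional Jensen's inequality to the concave function $\sqrt{\cdot}$ yields
\[
\EX{\varepsilon_{k+1}}{\Rfilt_k} \;=\; \EX{\sqrt{\varepsilon_{k+1}^2}}{\Rfilt_k} \;\leq\; \sqrt{\EX{\varepsilon_{k+1}^2}{\Rfilt_k}} \;=\; \sigma_{k+1} \quad \Pas ,
\]
so that
\[
\EX{S_{k+1}}{\Rfilt_k} - S_k \;=\; \EX{\varepsilon_{k+1}}{\Rfilt_k} \;\leq\; \sigma_{k+1} \quad \Pas .
\]

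This is exactly the hypothesis of Lemma~\ref{lem:RS} with $r_k = S_k$, $a_k = 0$, and $z_k = \sigma_{k+1}$ (which is non-negative, $\Rfilt_k$-measurable and $\Pas$ summable). The conclusion of Lemma~\ref{lem:RS} then gives that $\seq{S_k}$ converges $\Pas$ to a $[0,+\infty[$-valued random variable, which is precisely the $\Pas$ summability $\sum_{k\in\N}\varepsilon_k < +\infty$ we sought.

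There is no substantive obstacle: the only delicate point is invoking conditional Jensen for the concave square-root (valid since $\varepsilon_{k+1}^2\in L^1(\Rfilt_k)$ by assumption on $\sigma_{k+1}$), and verifying measurability of $\sigma_{k+1}$ with respect to $\Rfilt_k$, both of which are immediate.
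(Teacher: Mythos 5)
Your proof is correct, but it follows a genuinely different and more direct route than the paper's. You apply the Robbins--Siegmund lemma (Lemma~\ref{lem:RS}) to the raw partial sums $S_k=\sum_{i=1}^{k}\varepsilon_i$, bounding each conditional increment by $\EX{\varepsilon_{k+1}}{\Rfilt_k}\leq\pa{\EX{\varepsilon_{k+1}^2}{\Rfilt_k}}^{1/2}=\sigma_{k+1}$ via conditional Jensen (Cauchy--Schwarz), so the summable majorant is exactly the assumed sequence and the monotone process $S_k$ converges a.s.\ to a finite limit, which is the claim. The paper instead centers the increments, $\zeta_k=\varepsilon_k-\EX{\varepsilon_k}{\Rfilt_{k-1}}$, applies Lemma~\ref{lem:RS} to the squared martingale $r_k=\pa{\sum_{i\leq k}\zeta_i}^2$ with majorant $\EX{\varepsilon_{k+1}^2}{\Rfilt_k}$ (summable since $\sigma_k\to0$ forces $\sigma_k^2\leq\sigma_k$ eventually), and then recovers the conclusion from $\sum_i\varepsilon_i=\sum_i\zeta_i+\sum_i\EX{\varepsilon_i}{\Rfilt_{i-1}}$ together with the same Jensen bound. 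Your version is shorter and only uses the first conditional moment; the paper's version yields in passing the a.s.\ convergence of the centered martingale $\sum_i\zeta_i$, an intermediate fact not needed for the statement. One cosmetic point: the hypothesis gives $\EX{\varepsilon_{k+1}^2}{\Rfilt_k}<+\infty$ $\Pas$, not integrability of $\varepsilon_{k+1}^2$, so the Jensen step should be justified through the conditional expectation of nonnegative random variables (as the paper implicitly does) rather than through $\varepsilon_{k+1}^2$ being in $L^1$; this does not affect the validity of the inequality or of your argument.
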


\begin{proof}
Let $\zeta_k = \varepsilon_k - \EX{\varepsilon_k}{\Rfilt_{k-1}}$ and $r_k = \pa{\sum_{i=1}^k \zeta_i}^2$. We obviously have $\EX{\zeta_{k+1}}{\Rfilt_k} = 0$. Thus
\begin{align*}
\EX{r_{k+1}}{\Rfilt_k} 
&= \pa{\sum_{i=1}^k \zeta_i}^2 + \sum_{i=1}^k \zeta_i \EX{\zeta_{k+1}}{\Rfilt_k} + \EX{\zeta_{k+1}^2}{\Rfilt_k} \\
&= r_k + \EX{\zeta_{k+1}^2}{\Rfilt_k} 
= r_k + \VarX{\varepsilon_{k+1}^2}{\Rfilt_k} 
\leq r_k + \EX{\varepsilon_{k+1}^2}{\Rfilt_k} .
\end{align*}
It is easy to see that $\seq{\pa{\EX{\varepsilon_k^2}{\Rfilt_{k-1}}}^{1/2}} \in \ell_+^1\pa{\RFilt}$ implies $\seq{\EX{\varepsilon_k^2}{\Rfilt_{k-1}}} \in \ell_+^1\pa{\RFilt}$, and we can apply Lemma~\eqref{lem:RS} to get that 
\[
\lim_{k \to +\infty} r_k
\]
exists and is finite $\Pas$. Using Jensen's inequality we have
\[
0 \leq \sum_{i=1}^{k} \varepsilon_i  = \sum_{i=1}^{k} \zeta_i + \sum_{i=1}^{k} \EX{\varepsilon_i}{\Rfilt_{i-1}} \leq r_k^{1/2} + \sum_{i=1}^{k} \pa{\EX{\varepsilon_i^2}{\Rfilt_{i-1}}}^{1/2} .
\]
Passing to the limit using that $\seq{\pa{\EX{\varepsilon_k^2}{\Rfilt_{k-1}}}^{1/2}} \in \ell_+^1\pa{\RFilt}$ proves the claim. \qed
\end{proof}

%

\begin{lemma}[Extended descent lemma]\label{ext_descent_lemma}
Let  $g: \cH \to \R$ be  a  convex function whose gradient is $L$-Lipschitz continuous. Let $s \in ]0,1/L]$. Then for all $(x,y) \in \cH^2$, we have
\begin{equation}\label{eq:extdesclem}
g(y - s \nabla g (y)) \leq g(x) + \dotp{\nabla g(y)}{y-x} - \frac{s}{2} \norm{\nabla g(y)}^2 -\frac{s}{2} \norm{\nabla g (x) - \nabla g(y)}^2 .
\end{equation}
\end{lemma}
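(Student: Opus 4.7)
The plan is to combine the classical descent lemma (applied at $y$) with the sharp strong-convexity-type inequality for functions with $L$-Lipschitz gradients (a Baillon--Haddad-flavored estimate), and exploit the assumption $sL\leq 1$ to connect the two.

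First I would apply the standard descent lemma to $g$ at the point $y$ with increment $-s\nabla g(y)$:
\[
g(y-s\nabla g(y))\leq g(y)-s\|\nabla g(y)\|^2+\frac{Ls^2}{2}\|\nabla g(y)\|^2 .
\]
Using $sL\leq 1$, the coefficient of $\|\nabla g(y)\|^2$ on the right is at most $-s/2$, so
\[
g(y-s\nabla g(y))\leq g(y)-\frac{s}{2}\|\nabla g(y)\|^2 .
\]

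Next I would invoke the refined inequality, valid for every convex $g$ whose gradient is $L$-Lipschitz:
\[
g(x)\geq g(y)+\dotp{\nabla g(y)}{x-y}+\frac{1}{2L}\|\nabla g(x)-\nabla g(y)\|^2
\qquad\forall x,y\in\cH .
\]
This is a standard consequence of the Baillon--Haddad theorem (equivalently, the co-coercivity of $\nabla g$ with constant $1/L$), and it can be proved by applying the usual descent lemma to the auxiliary convex function $z\mapsto g(z)-\dotp{\nabla g(x)}{z}$ (which is minimized at $x$) at the point $y$ with increment $-\frac{1}{L}(\nabla g(y)-\nabla g(x))$. Rearranging gives
\[
g(y)\leq g(x)+\dotp{\nabla g(y)}{y-x}-\frac{1}{2L}\|\nabla g(x)-\nabla g(y)\|^2 .
\]
Since $s\leq 1/L$, we have $\frac{1}{2L}\geq\frac{s}{2}$, and therefore
\[
g(y)\leq g(x)+\dotp{\nabla g(y)}{y-x}-\frac{s}{2}\|\nabla g(x)-\nabla g(y)\|^2 .
\]

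Finally, plugging this upper bound on $g(y)$ into the first inequality yields exactly \eqref{eq:extdesclem}. The only non-routine ingredient is the sharp ``$1/(2L)$''-refinement of convexity, so I would either cite Baillon--Haddad directly or include the two-line derivation sketched above; everything else is algebraic manipulation and the hypothesis $sL\leq 1$.
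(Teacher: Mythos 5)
Your argument is correct: the classical descent lemma with step $s\le 1/L$, combined with the $\tfrac{1}{2L}$-refined gradient inequality (co-coercivity/Baillon--Haddad), yields exactly \eqref{eq:extdesclem}; the only cosmetic point is that your auxiliary-function sketch produces that refined inequality with the roles of $x$ and $y$ swapped, which is immaterial since it holds for all pairs. The paper itself gives no proof, referring instead to \cite[Lemma~1]{ACFR}, and the argument there is precisely this standard combination, so your route is essentially the intended one.
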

See e.g. \cite[Lemma~1]{ACFR}

\end{document}